\documentclass{amsart}
\usepackage{amsfonts,xcolor}
\usepackage{amsthm}
\usepackage{latexsym}
\usepackage{amsmath}
\usepackage{amssymb}
\usepackage{arydshln}
\setcounter{MaxMatrixCols}{10}

\newtheorem{thm}{Theorem}
\theoremstyle{plain}

\newtheorem{coro}{Corollary}

\newtheorem{defi}{Definition}
\newtheorem{exam}{Example}

\newtheorem{lem}{Lemma}

\newtheorem{prop}{Proposition}
\newtheorem{remark}{Remark}

\numberwithin{equation}{section}
\usepackage{geometry}

\geometry{a4paper, top=1.5cm, left=2.5cm, right=2.5cm, bottom=2.0cm, includehead, includefoot}

\begin{document}
\title{On the regularity of the Hankel determinant sequence of the characteristic sequence of powers}
\author{Yingjun Guo}

\begin{abstract}
For any sequences $\mathbf{u}=\{u(n)\}_{n\geq0}, \mathbf{v}=\{v(n)\}_{n\geq0},$ we define $\mathbf{u}\mathbf{v}:=\{u(n)v(n)\}_{n\geq0}$ and $\mathbf{u}+\mathbf{v}:=\{u(n)+v(n)\}_{n\geq0}$.
Let $f_i(x)~(0\leq i< k)$ be sequence polynomials whose coefficients are integer sequences. We say an integer sequence $\mathbf{u}=\{u(n)\}_{n\geq0}$ is a polynomial generated sequence if
$$\{u(kn+i)\}_{n\geq0}=f_i(\mathbf{u}),~(0\leq i< k).$$

In this paper, we study the polynomial generated sequences.  Assume $k\geq2$ and $f_i(x)=\mathbf{a}_ix+\mathbf{b}_i~(0\leq i< k)$. If $\mathbf{a}_i$ are $k$-automatic and  $\mathbf{b}_i$ are $k$-regular for $0\leq i< k$, then we prove that the  corresponding polynomial generated sequences are $k$-regular.  As a application, we prove that  the Hankel determinant sequence $\{\det(p(i+j))_{i,j=0}^{n-1}\}_{n\geq0}$ is $2$-regular, where $\{p(n)\}_{n\geq0}=0110100010000\cdots$ is the  characteristic sequence of powers 2.  Moreover, we give a answer of Cigler's conjecture about the Hankel determinants.\end{abstract}

\maketitle



\section{Introduction}
To introduce our motivation for the problem in this paper, we first recall some basic definitions of automatic and regular sequences.

\subsection{Automatic and regular sequences}
We say a sequence $\mathbf{u} = \{u(n)\}_{n\geq0}$ with values in a finite set \emph{$k$-automatic} if,  informally speaking, $u(n)$ is a finite-state function of the base-$k$ expansion of $n$ \cite{Cob,AS03}.  This is equivalent to the fact that the $k$-kernel $\mathcal{K}_k(\mathbf{u})$ is a finite set  \cite{CKM,E1974}, where the  \emph{$k$-kernel}  is a collection of subsequences  $$\mathcal{K}_k(\mathbf{u})=\Big\{\{u(k^in+j)\}_{n\geq0}:i\geq0,0\leq j<k^{i}\Big\}.$$

While all $k$-automatic sequences are defined over finite alphabets, Allouche and Shallit \cite{AS1992, jp} introduced a wider class of $k$-regular sequences that are allowed to take values in a Noetherian ring $R$. A sequence is \emph{$k$-regular} if the module generated by its $k$-kernel is finitely generated. In this paper, unless otherwise stated, the sequences we considered are integer sequences and assume the underlying ring is $\mathbb{Z}$. More precisely, we say that an integer sequence $\{u(n)\}_{n\geq0}$ is \emph{$k$-regular} if every sequence of its $k$-kernel is a $\mathbb{Z}$-linear combination of a finite set. That is to say, there exist a finite number of integer sequences $\{a_1(n)\}_{n\geq0}, \{a_2(n)\}_{n\geq0},\cdots,\{a_N(n)\}_{n\geq0}$ such that for any $i\geq0,0\leq j<k^i$, there exist $c_1,c_2,\cdots,c_N\in\mathbb{Z}$ such that
$$u(k^in+j)=\sum_{\ell=1}^Nc_{\ell}a_{\ell}(n),~(n\geq0).$$

The $k$-regular sequences play the same role for integer valued sequences as the $k$-automatic sequences play for sequences over a finite alphabet.
More relations between the $k$-regular sequences and the $k$-automatic sequences can be found in \cite{AS1992, jp,CRS}.
\medskip


\subsection{Polynomial generated sequences}
For any  sequences $\mathbf{u}=\{u(n)\}_{n\geq0}$ and $\mathbf{v}=\{v(n)\}_{n\geq0}$.  We define addition and multiplication as follows:
\begin{itemize}
  \item $\mathbf{u}+\mathbf{v}:=\{u(n)+v(n)\}_{n\geq0},$
  \item $\mathbf{u}\cdot\mathbf{v}:=\{u(n)v(n)\}_{n\geq0}.$
\end{itemize}
Let $\mathcal{R}$ denote the set of all integer sequences.  Then $(\mathcal{R},+,\cdot)$ forms a commutative ring.  Similarly, $\mathcal{R}[X]$, the set of polynomials in the indeterminate $x$ over $\mathcal{R}$, is the set of all expressions of the form
$$\mathbf{a}_0+\mathbf{a}_1 x+\cdots+\mathbf{a}_m x^m.$$
Each element of $\mathcal{R}[X]$ is call a \emph{sequence polynomial}.  If $\mathbf{a}_m$ is a nonzero sequence, then $m$ is called the \emph{degree} of $f$.
If $\mathbf{a}_i=\{a_i\}_{n\geq0}$ are constant sequences,  then the sequence polynomial is called to be  a \emph{constant sequence polynomial}, and denoted briefly by $a_0+a_1 x+\cdots+a_n x^n$.

\smallskip

A sequence polynomial can be considered as a map from $\mathcal{R}$ to $\mathcal{R}$.
Let $\mathbf{u}$ be an integer sequence and $f(x)=\mathbf{a}_0+\mathbf{a}_1 x+\cdots+\mathbf{a}_m x^m$ be a sequence polynomial with degree $m$ in $\mathcal{R}[X]$. Then the \emph{image} of $\mathbf{u}$ under the map $f(x)$ is the integer sequence $f(\mathbf{u})=\mathbf{a}_0+\mathbf{a}_1\mathbf{u}+\cdots+\mathbf{a}_m \mathbf{u}^m$.
In particular, if $f(x)=a_0+a_1 x$, then $f(\mathbf{u})$ is called to be a \emph{linear polynomial} of $\mathbf{u}$.

\smallskip

\begin{defi}\label{poly1}
Given an integer sequence $\mathbf{u}=\{u(n)\}_{n\geq0}$. If there exists an integer $k\geq1$ and sequence polynomials $f_i(x)\in\mathcal{R}[X]~(0\leq i<k)$ such that
$$\{u(kn+i)\}_{n\geq0}=f_i(\mathbf{u}),$$
then we say that $\mathbf{u}$ is a \emph{polynomial generated sequence.} The set of polynomials $\{f_i(x)\in\mathcal{R}[X]:0\leq i< k\}$ is called to be a \emph{generated polynomial system}. The set of the sequences generated by polynomials $\{f_i(x)\in\mathcal{R}[X]:0\leq i< k\}$ is denoted by $\mathcal{G}(f_0,f_1,\cdots,f_{k-1})$.
\end{defi}
\smallskip
Assume $f_i(x)=\mathbf{a}_ix+\mathbf{b}_i$, where $\mathbf{a}_i=\{a_i(n)\}_{n\geq0}, \mathbf{b}_i=\{b_i(n)\}_{n\geq0}$ for $0\leq i< k.$  If $\mathbf{u}=\{u(n)\}_{n\geq0}\in\mathcal{G}(f_0,f_1,\cdots,f_{k-1})$, then, for $0\leq i< k, n\geq0,$
$$u(kn+i)=a_i(n)u(n)+b_i(n).$$
Hence, we have
\begin{eqnarray*}
u(0)& = & \frac{b_0(0)}{1-a_0(0)},\\
u(i) & = & a_i(0)u(0)+ b_i(0) ~\text{for $1\leq i< k$},\\
\cdots &\cdots&\cdots\cdots
\end{eqnarray*}
Note from above that $u(n)$ are determined by $a_i(n)$ and $b_i(n)$ for $0\leq i< k, n\geq0$. Moreover, if $u(0)$ is integer, then $\mathbf{u}$ is an integer sequence.

Hence, if the sequence polynomials $f_i(x)\in\mathcal{R}[X]~(0\leq i<k)$ are degree $1$, we always assume $b_0(0)=N(1-a_0(0))$ for some integer $N$, and we define $u(0)=0$ if $a_0(0)=1$ in this paper.  In this case, the set $\mathcal{G}(f_0,f_1,\cdots,f_{k-1})$ always exists and has exactly one sequence $\mathbf{u}=\{u(n)\}_{n\geq0}$, and we denote the polynomial generated sequence $\mathbf{u}$ by $\mathcal{G}(f_0,f_1,\cdots,f_{k-1})$ briefly.

\begin{exam}\label{coro-poly1}
\begin{enumerate}
\item The polynomial generated sequence $\mathcal{G}(x,-x+1)=\{t(n)\}_{n\geq0}$ is the famous Thue-Morse sequence.
  \item Assume $k\geq2$. The polynomial generated sequence $$\mathcal{G}(x,x+1\cdots,x+k-1)=\{s_k(n)\}_{n\geq0}$$ is a $k$-regular sequence,
where $s_k(n)$ is the sum of the digits in the base-$k$ representation of $n$. \end{enumerate}
\end{exam}

So, given a generated polynomial system $\{f_i(x)\in\mathcal{R}[X]:0\leq i< k\}$, where the polynomials are degree $1$, we can obtain a set of integer sequences $\mathcal{G}(f_0,f_1,\cdots,f_{k-1})$. Then, a problem appears. What can be said about properties of $\mathcal{G}(f_0,f_1,\cdots,f_{k-1})$?

If $f_i(x)$ are linear polynomials for $0\leq i< k$ with $k\geq2$, then it is easy to check that  $\mathcal{G}(f_0,f_1,\cdots,f_{k-1})$ is a $k$-regular sequence. Moreover, if $f_i(x)=a_ix+\mathbf{b}_i$ for $0\leq i< k$, where $a_i$ are integers and $\mathbf{b}_i$ are $k$-regular, then $\mathcal{G}(f_0,f_1,\cdots,f_{k-1})$ is also $k$-regular.
The following theorem gives a general result.
\smallskip
\begin{thm}\label{polynomial1}
Assume $k\geq2$ and $f_i(x)=\mathbf{a}_ix+\mathbf{b}_i$ for $0\leq i< k$. If $\mathbf{a}_i$ are $k$-automatic and $\mathbf{b}_i$ are $k$-regular, then the polynomial generated sequence $\mathcal{G}(f_0,f_1,\cdots,f_{k-1})$ is $k$-regular.
\end{thm}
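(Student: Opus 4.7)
The approach is to exhibit an explicit finitely generated $\mathbb{Z}$-submodule $\mathcal{M}$ of $\mathcal{R}$ that contains $\mathbf{u}:=\mathcal{G}(f_0,\ldots,f_{k-1})$ and is stable under each section operator $K_e:\mathbf{w}\mapsto\{w(kn+e)\}_{n\ge0}$; by the standard equivalent characterization of $k$-regularity this forces $\mathbf{u}$ to be $k$-regular. The setup has two halves. For the $k$-automatic coefficients, let $\mathcal{K}=\bigcup_{i=0}^{k-1}\mathcal{K}_k(\mathbf{a}_i)=\{\alpha_1,\ldots,\alpha_P\}$, which is finite, and form the joint-state sequence $\mathbf{A}(n):=(\alpha_1(n),\ldots,\alpha_P(n))$. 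As $n$ varies, $\mathbf{A}(n)$ takes values in a finite set $\mathbf{Q}\subset\mathbb{Z}^P$; the kernel identity $\alpha_s(kn+e)=\alpha_{\sigma_e(s)}(n)$ supplies a map $\Delta:\mathbf{Q}\times\{0,\ldots,k-1\}\to\mathbf{Q}$ with $\mathbf{A}(kn+e)=\Delta(\mathbf{A}(n),e)$, and each $a_i(n)$ equals $\tau_i(\mathbf{A}(n))$ for a fixed coordinate function $\tau_i$. For the inhomogeneous terms, let $\mathcal{M}_b$ be the $\mathbb{Z}$-submodule generated by $\bigcup_i\mathcal{K}_k(\mathbf{b}_i)$; it is finitely generated by hypothesis and automatically $K_e$-stable, and choosing a basis $\beta_1,\ldots,\beta_Q$ yields integer matrices $C^{(e)}$ with $\beta_t(kn+e)=\sum_s C^{(e)}_{ts}\beta_s(n)$ and integer coefficients $\lambda_t^{(i)}$ with $\mathbf{b}_i=\sum_t\lambda_t^{(i)}\beta_t$.

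The key construction introduces the indicator sequences $\chi_{\mathbf{q}}(n):=\mathbf{1}[\mathbf{A}(n)=\mathbf{q}]$ for $\mathbf{q}\in\mathbf{Q}$, and sets
\[
\mathcal{M}:=\mathbb{Z}\text{-span of }\{\chi_{\mathbf{q}}\mathbf{u}:\mathbf{q}\in\mathbf{Q}\}\cup\{\chi_{\mathbf{q}}\beta_t:\mathbf{q}\in\mathbf{Q},\ 1\le t\le Q\}.
\]
Since $\sum_{\mathbf{q}}\chi_{\mathbf{q}}\equiv1$, we have $\mathbf{u}=\sum_{\mathbf{q}}\chi_{\mathbf{q}}\mathbf{u}\in\mathcal{M}$. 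Closure under $K_e$ rests on two identities: first, $\chi_{\mathbf{q}}(kn+e)=\sum_{\mathbf{q}'\in\Delta^{-1}(\mathbf{q},e)}\chi_{\mathbf{q}'}(n)$, and second, the crucial simplification $\chi_{\mathbf{q}'}(n)\,a_e(n)=\tau_e(\mathbf{q}')\,\chi_{\mathbf{q}'}(n)$ (because $\chi_{\mathbf{q}'}(n)\neq0$ forces $\mathbf{A}(n)=\mathbf{q}'$). Combined with $u(kn+e)=a_e(n)u(n)+b_e(n)$ these yield
\[
K_e(\chi_{\mathbf{q}}\mathbf{u})=\sum_{\mathbf{q}'\in\Delta^{-1}(\mathbf{q},e)}\Big(\tau_e(\mathbf{q}')\,\chi_{\mathbf{q}'}\mathbf{u}+\sum_t\lambda^{(e)}_t\,\chi_{\mathbf{q}'}\beta_t\Big)\in\mathcal{M},
\]
and an analogous computation using the matrices $C^{(e)}$ gives $K_e(\chi_{\mathbf{q}}\beta_t)=\sum_{\mathbf{q}'\in\Delta^{-1}(\mathbf{q},e)}\sum_s C^{(e)}_{ts}\chi_{\mathbf{q}'}\beta_s\in\mathcal{M}$, finishing the verification.

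The main obstacle is precisely the variable coefficient $\mathbf{a}_e$ in the recursion: iterating $K_e$ naively on $\mathbf{a}_e\mathbf{u}$ spawns arbitrarily long pointwise products of elements of $\mathcal{K}$, and the $\mathbb{Z}$-subalgebra of $\mathcal{R}$ generated by $\mathcal{K}$ need not be finitely generated as a module (take constant $\mathbf{a}_0\equiv2$). The device of splitting the line $\mathbb{Z}\mathbf{u}$ across the indicators $\chi_{\mathbf{q}}$ converts each such variable coefficient into one of finitely many integer constants $\tau_e(\mathbf{q}')$, which is exactly what collapses the kernel-closure onto the finitely generated module $\mathcal{M}$.
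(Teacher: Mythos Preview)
Your proof is correct, and at heart it uses the same mechanism as the paper's: indicator sequences convert the variable automatic coefficients $\mathbf{a}_i$ into finitely many integer constants, which is exactly what prevents the iterated products from blowing up. The organization, however, is different. The paper first isolates a combinatorial Lemma (its Lemma~1) stating that if $\mathcal{S}$ is any finite set of finitely-valued integer sequences then $\bigcup_{i\ge0}\mathcal{S}^i$ is finitely generated, proved by writing each element of $\mathcal{S}$ as a $\mathbb{Z}$-combination of its own value-indicators $\chi_\mathbf{u}^j$ and using $(\chi_\mathbf{u}^j)^2=\chi_\mathbf{u}^j$; it then simply observes the containment $\mathcal{K}_k(\mathbf{u})\subset\bigcup_{i\ge0}\bigl(\mathcal{A}^i\mathbf{u}+\mathcal{B}\sum_{s<i}\mathcal{A}^s\bigr)$ and applies the lemma. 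You instead pass to the \emph{joint} state of all the $\mathbf{a}_i$ at once, take the state-indicators $\chi_{\mathbf{q}}$, and build the explicit module $\mathcal{M}=\mathrm{span}\{\chi_{\mathbf{q}}\mathbf{u},\,\chi_{\mathbf{q}}\beta_t\}$ that you verify is section-closed. Your route gives a concrete, small generating set and makes the linear-representation matrices for $\mathbf{u}$ essentially readable from $\tau_e,\Delta,C^{(e)}$; the paper's route packages the indicator trick once as a reusable lemma (which it then re-applies in the shift version, Theorem~2), at the cost of a less explicit final module.
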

\smallskip

\medskip

Let $\mathbf{u}=\{u(n)\}_{n\geq0}$ be a sequence, then we define the \emph{shift map} $S(\mathbf{u})$ to be the sequence $\{u(n+1)\}_{n\geq0}$. Similarly,  we have $S^k(\mathbf{u}) = u(k)u(k+1)u(k+2)\cdots$ for $k\geq0$. Let $f=\mathbf{a}_0+\mathbf{a}_1 x+\cdots+\mathbf{a}_m x^m$ be a sequence polynomial. Then, we define the \emph{composition} $f\circ S(\mathbf{u})$, to be the sequence $f(S(\mathbf{u}))$.

\smallskip
\begin{defi}\label{defi2}
Given an integer sequence $\mathbf{u}=\{u(n)\}_{n\geq0}$. If there exist integers $k\geq1, N\geq0$, polynomials $f_i(x)\in\mathcal{R}[X]$ and shifts $S_i\in\{S^j:j\geq0\}$ for $0\leq i<k$ such that
$$u(kn+i)=f_i\circ S_i(\mathbf{u})(n), ~(n\geq N)$$
then we say that $\mathbf{u}$ is a \emph{polynomial generated sequence with shift}.
\end{defi}
\smallskip

The following theorem tells us that the shift do not change the regularity of  the polynomial generated sequence.
\begin{thm}\label{polynomial2}
Assume $k\geq2$ and $f_i(x)=\mathbf{a}_ix+\mathbf{b}_i$ for $0\leq i<k$. If $\mathbf{a}_i$ are $k$-automatic and $\mathbf{b}_i$ are $k$-regular, then the polynomial generated sequences with shift  are $k$-regular.
\end{thm}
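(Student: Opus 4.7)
The plan is to reduce Theorem \ref{polynomial2} to Theorem \ref{polynomial1} by absorbing each shift $S_i = S^{j_i}$ into a finite family of shifted copies of $\mathbf{u}$, which will jointly satisfy a polynomial system of the same shape as in Theorem \ref{polynomial1}.

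First, since $k$-regularity is preserved under both finite modifications and left shifts, I may assume without loss of generality that $N = 0$, so the defining relation reads
\[ u(kn+i) \;=\; \mathbf{a}_i(n)\cdot u(n+j_i) + \mathbf{b}_i(n), \qquad 0 \le i < k,\ n \ge 0. \]
Set $J = \max_{0 \le i < k} j_i$ and fix an integer $T$ large enough that $\lceil T/k \rceil + J \le T$; any $T \ge kJ/(k-1)$ works, e.g.\ $T = 2J$. Introduce the shifted sequences $\mathbf{u}^{(s)} := S^s \mathbf{u}$ for $0 \le s \le T$, and for each pair $(s,i)$ write $s + i = k\,q_{s,i} + r_{s,i}$ with $0 \le r_{s,i} < k$. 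A direct computation then yields
\[
u^{(s)}(kn+i) \;=\; \bigl(S^{q_{s,i}}\mathbf{a}_{r_{s,i}}\bigr)(n)\cdot u^{(\,q_{s,i}+j_{r_{s,i}}\,)}(n) \;+\; \bigl(S^{q_{s,i}}\mathbf{b}_{r_{s,i}}\bigr)(n),
\]
and the choice of $T$ forces $q_{s,i}+j_{r_{s,i}} \in \{0,1,\ldots,T\}$. Since left shift preserves $k$-automaticity and $k$-regularity, each new coefficient $S^{q_{s,i}}\mathbf{a}_{r_{s,i}}$ is $k$-automatic and each new inhomogeneity $S^{q_{s,i}}\mathbf{b}_{r_{s,i}}$ is $k$-regular.

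Consequently, the vector $(\mathbf{u}^{(0)},\ldots,\mathbf{u}^{(T)})$ satisfies a recurrence of exactly the form treated in Theorem \ref{polynomial1}, except that the scalar coefficient $\mathbf{a}_i$ is replaced by a $(T{+}1)\times(T{+}1)$ matrix of $k$-automatic sequences (each row having a single nonzero entry) and the scalar $\mathbf{b}_i$ by a $(T{+}1)$-vector of $k$-regular sequences. The proof of Theorem \ref{polynomial1}---which exhibits a finite generating set for the $\mathbb{Z}$-module spanned by the $k$-kernel of the generated sequence---transfers to this vector setting by replacing every scalar multiplication with the corresponding matrix--vector multiplication; all closure properties (of $k$-automatic sequences under shift and pointwise product, of $k$-regular sequences under shift, sum, and subsequence extraction) act coordinatewise. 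Hence each $\mathbf{u}^{(s)}$ is $k$-regular, and in particular so is $\mathbf{u} = \mathbf{u}^{(0)}$.

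The main (and essentially only) technical obstacle is verifying that the argument behind Theorem \ref{polynomial1} really does lift to a system of sequences; I expect this to be routine because every ingredient respects direct sums, but it is the place where a careful reader will want the coordinatewise bookkeeping spelled out. Managing the indices $q_{s,i}, r_{s,i}, j_{r_{s,i}}$ is the other potential pitfall, and it is dispatched once the inequality $T \ge kJ/(k-1)$ is in force.
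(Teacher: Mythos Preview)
Your proposal is correct and follows essentially the same approach as the paper: both arguments introduce the finite family of shifts $\{S^s\mathbf{u}:0\le s\le T\}$ with a threshold $T$ of order $kJ/(k-1)$, verify that $k$-decimation sends this family into itself (with coefficients that are shifts of the $\mathbf{a}_i$ and inhomogeneities that are shifts of the $\mathbf{b}_i$), and then invoke the finite-generation argument underlying Theorem~\ref{polynomial1}. The only cosmetic differences are that the paper absorbs the case $N>0$ by adding explicit eventually-zero correction sequences $\mathbf{c}_i$ to the $\mathbf{b}_i$ (your ``WLOG $N=0$'' does the same thing implicitly), and the paper carries out the containment $\mathcal{K}_k(\mathbf{u})\subset\bigcup_i\bigl(\mathcal{A}_Q^i\mathcal{U}+(\mathcal{B}_Q+\mathcal{C}_Q)\sum_{s<i}\mathcal{A}_Q^s\bigr)$ directly rather than phrasing it as a ``vector version'' of Theorem~\ref{polynomial1}.
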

In particular,  we have
\begin{coro}\label{coro-poly2}
 Let $\{a_i(n)\}_{n\geq0}$ and $\{b_i(n)\}_{n\geq0}$ be $2$-automatic sequences for $0\leq i< 2$. Assume $\mathbf{u}=\{u(n)\}_{n\geq0}$ is a sequence defined by
$$u(2n)=a_0(n)u(n)+b_0(n),~~u(2n+1)=a_1(n)u(n+1)+b_1(n).$$
Then the sequence $\mathbf{u}$ is $2$-regular.
\end{coro}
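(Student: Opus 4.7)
The plan is to observe that the hypothesis of Corollary \ref{coro-poly2} is a special case of Theorem \ref{polynomial2}, and then invoke that theorem directly. To carry this out, I first need to cast the two recurrences into the framework of Definition \ref{defi2}, i.e., to exhibit $\mathbf{u}$ as a polynomial generated sequence with shift.

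Set $k=2$ and $N=0$. Define the sequence polynomials $f_0(x) = \mathbf{a}_0 x + \mathbf{b}_0$ and $f_1(x) = \mathbf{a}_1 x + \mathbf{b}_1$, where I identify $\mathbf{a}_i = \{a_i(n)\}_{n\geq 0}$ and $\mathbf{b}_i = \{b_i(n)\}_{n\geq 0}$. Choose the shifts $S_0 = S^0$ (the identity) and $S_1 = S^1$. With these choices, the definition of composition gives
\begin{align*}
f_0 \circ S^0(\mathbf{u})(n) &= a_0(n)\, u(n) + b_0(n), \\
f_1 \circ S^1(\mathbf{u})(n) &= a_1(n)\, u(n+1) + b_1(n),
\end{align*}
which are exactly the two recurrences defining $u(2n)$ and $u(2n+1)$. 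Hence $\mathbf{u}$ is a polynomial generated sequence with shift in the sense of Definition \ref{defi2}.

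It remains to verify that the hypotheses of Theorem \ref{polynomial2} are met. By assumption, $\mathbf{a}_0$ and $\mathbf{a}_1$ are $2$-automatic. Also by assumption, $\mathbf{b}_0$ and $\mathbf{b}_1$ are $2$-automatic, and since every $2$-automatic (integer-valued) sequence is $2$-regular, they are in particular $2$-regular. Therefore Theorem \ref{polynomial2} applies with $k=2$, and we conclude that $\mathbf{u}$ is $2$-regular.

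Since this is purely a matter of translating notation and quoting the main theorem, I do not anticipate any real obstacle. The only minor point to be careful about is the slight asymmetry between the two recurrences: one uses $u(n)$ while the other uses $u(n+1)$. This is precisely why the ``with shift'' version of the theorem is needed rather than Theorem \ref{polynomial1}; once that is noted, the argument is immediate.
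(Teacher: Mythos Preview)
Your proposal is correct and matches the paper's approach: the paper presents Corollary \ref{coro-poly2} immediately after Theorem \ref{polynomial2} with the phrase ``In particular, we have'' and gives no separate proof, so the intended argument is exactly the specialization to $k=2$, $S_0=S^0$, $S_1=S^1$ that you wrote out.
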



\medskip

\subsection{Hankel determinants of the characteristic sequence of the power of $2$}

Let $\mathbf{u}=\{u(n)\}_{n\geq0}$ be  a sequence of real numbers. For every integer $k\geq0$, define a  Hankel matrix $\mathbf{u}_{m,n}^{k}$ of order $m\times n$ associated with $\mathbf{u}$ as follows:
$$\mathbf{u}_{m,n}^{k}=\begin{pmatrix}
    u(k)  &   u(k+1) & \cdots &  u(k+n-1) \\
    u(k+1)  &   u(k+2) & \cdots &  u(k+n) \\
    \vdots &   \vdots & \ddots &  \vdots \\
    u(k+m-1)  &   u(k+m) & \cdots &  u(k+m+n-2) \\
\end{pmatrix}.$$

Note that the rows of $\mathbf{u}_{m,n}^{k}$ are made up of successive length-$n$ ``windows" into the sequence  $\mathbf{u}$.  If $m=n$, we always use the symbols $\mathbf{u}_{n}^{k}$ and $|\mathbf{u}_{n}^{k}|$ to stand for the $n$-order Hankel matrix and $n$-order Hankel determinant respectively.

Hankel determinants associated with a sequence play an important role in the study of the moment problem, Pad\'{e} approximation, and the combinatorial properties of sequence \cite{BG,BHWY,GWW,L02,KTW}. Given a $k$-automatic integer sequence $\mathbf{u}=\{u(n)\}_{n\geq0}$, we obtain a sequence of Hankel determinants $|\mathbf{u}_{n}^{m}|$.  Note that the Hankel determinants $|\mathbf{u}_{n}^{m}|$ are determined by the block $u(m)u(m+1)\cdots u(m+2n)$ for any fixed $n\geq1$. And the block sequence $\{u(m)u(m+1)\cdots u(m+2n)\}_{m\geq0}$ is $k$-automatic. Hence,  the determinant sequence $\{|\mathbf{u}_{n}^{m}|\}_{m\geq0}$ is $k$-automatic, please see \cite{APWW98}.

There are some results about the automaticity of the Hankel determinant sequences. Allouche, Peyri\`ere, Wen and Wen first studied the Hankel determinant of the Thue-Morse sequence $\mathbf{t}$ in \cite{APWW98}. They proved that the sequences $\{|\mathbf{t}_{n}^{m}|(\bmod 2)\}_{n\geq0}$ are $2$-automatic. In the same way, Wen, Wu \cite{WW} and Guo, Wen \cite{GW} respectively studied the  the Hankel determinants of the Cantor sequence $\mathbf{c}$ and the differences of Thue-Morse $\Delta^k(\mathbf{t})$. They proved that the sequences $\{|\mathbf{c}_{n}^{m}|(\bmod 3)\}_{n\geq0}$ are $3$-automatic, and the the sequences $\{|\mathbf{\Delta^k(\mathbf{t})}_{n}^{m}|(\bmod 2)\}_{n\geq0}$ are $2$-automatic.

Here, we point out  that  if $\{u(n)\}_{n\geq0}$ is $k$-regular over $\mathbb{Z}$, then $\{u(n) (\bmod m)\}_{n\geq0}$ is $k$-automatic for any $m\geq1$. But the converse does not hold. For example, the sequence $\{2^n(\bmod m)\}_{n\geq0}$ is $k$-automatic for any $m\geq1,k\geq2$, but the sequence $\{2^n\}_{n\geq0}$ is not $k$-regular for any $k\geq2$, please see \cite{AS1992}. Hence, although there are many sequences are either $k$-automatic or $k$-regular in \cite{CRS}, it is often quite challenging to determine the automaticity of the Hankel determinant sequences $\{|\mathbf{u}_{n}^{m}|\}_{n\geq0}$.


\smallskip

In this paper, we consider the Hankel determinants of the characteristic sequence of powers 2 $$\mathbf{p}=\{p(n)\}_{n\geq0}=0110100010000\cdots,$$
where $p(n)=1$ if $n=2^k$ for some $k\geq0$ and $p(n)=0$ otherwise.

Let $d(m,n)$ denote the $|\mathbf{p}_{n}^{m}|$ for $m\geq0,n\geq1$.  Using polynomial generated sequnces, we prove that
\begin{thm}\label{regular}
The sequence $\{d(0,n)\}_{n\geq0}$ is $2$-regular;
The sequences $\{d(2k,n)\}_{n\geq0}$ are $2$-automatic for all $k\geq1$; The sequences $\{d(2k+1,n)\}_{n\geq0}$ are periodic for all $k\geq0$.
\end{thm}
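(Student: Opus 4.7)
The plan is to exploit the sparseness of $\mathbf{p}$ (with $1$'s only at positions $1,2,4,8,\dots$) to analyze the Hankel matrices $\mathbf{p}_n^m$ via row and column reduction, and then invoke Theorem~\ref{polynomial2}. Since $\mathbf{p}_n^m$ is Hankel, its anti-diagonal $\{(i,j):i+j=s\}$ carries the constant value $p(m+s)$, which equals $1$ precisely when $m+s$ is a power of $2$. Thus the non-zero anti-diagonals of $\mathbf{p}_n^m$ sit at the very sparse positions $s=2^\ell-m$ for $2^\ell\geq m$, and for any fixed $m$ these positions grow exponentially in $\ell$.

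For parts (2) and (3): if $m\geq 1$, the $1$-anti-diagonals are sparse enough that, for $n$ large, most rows of $\mathbf{p}_n^m$ contain at most one $1$. Repeated Laplace expansion along those sparse rows should collapse $d(m,n)$ onto a fixed-size minor that stabilizes as $n$ grows, modified only by deterministic sign changes each time a new power of $2$ first falls inside the window $[m,m+2n-2]$. When $m=2k+1$ is odd, $m+s=2^\ell$ forces $s$ to be odd (outside the exceptional case $\ell=0$, relevant only for $m=1$), and the reduction will exhibit eventual periodicity in $n$. When $m=2k$ with $k\geq 1$, the same reduction will show that $d(m,n)$ takes only finitely many values and depends only on which dyadic interval $n$ lies in together with a few lower-order base-$2$ digits of $n$, hence is $2$-automatic.

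Part (1), the $2$-regularity of $\{d(0,n)\}_{n\geq 0}$, is the heart of the theorem. Here I would aim to derive recurrences of the schematic form
\[
d(0,2n)=\alpha(n)\,d(0,n)+\beta(n),\qquad d(0,2n+1)=\gamma(n)\,d(0,n+1)+\delta(n),
\]
where $\alpha,\gamma$ are $2$-automatic and $\beta,\delta$ are $2$-regular (both expressible in terms of the sequences $\{d(m,n)\}_{n\geq 0}$ for $m\geq 1$ already handled by parts (2) and (3), which are $2$-regular since $2$-automatic and periodic sequences are automatically $2$-regular). Such recurrences should arise from a block decomposition of $\mathbf{p}_{2n}^{0}$ combined with a Schur-complement or Dodgson-condensation identity, relating $d(0,2n)$ and $d(0,2n+1)$ to Hankel minors of size roughly $n$ with various shifts. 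Once the recurrences are established, Theorem~\ref{polynomial2} applied to $\{d(0,n)\}_{n\geq 0}$ together with its shifted companion $\{d(0,n+1)\}_{n\geq 0}$ yields $2$-regularity.

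The main obstacle is pinning down the recurrences in part~(1): the set of powers of $2$ that contribute to $\mathbf{p}_{2n}^{0}$ contains one more element than that contributing to $\mathbf{p}_n^0$, and tracking this extra contribution through the block manipulation, while keeping the coefficients $2$-automatic and the remainders within the $\mathbb{Z}$-span of the already-understood $\{d(m,n)\}_{m\geq 1}$, is what makes part~(1) substantially harder than parts (2) and (3). By contrast, parts (2) and (3) should reduce to relatively routine bounded-rank arguments once the anti-diagonal sparsity is carefully bookkept.
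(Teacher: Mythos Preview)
Your high-level plan for Part~(1) --- derive recurrences of the shape
\[
d(0,2n)=\alpha(n)\,d(0,n)+\beta(n),\qquad d(0,2n+1)=\gamma(n)\,d(0,n+1)+\delta(n)
\]
with $\alpha,\gamma$ automatic and $\beta,\delta$ regular, then feed this into Theorem~\ref{polynomial2} --- is exactly what the paper does. What differs is the mechanism for producing the recurrences. The paper does not use Schur complement or Dodgson condensation; instead it conjugates $\mathbf{p}_{2n}^{m}$ by the permutation that separates even-indexed from odd-indexed rows and columns. Because $p(2n)=p(n)$ and $p(2n+1)=0$ for $n\geq 1$, this single move turns the Hankel matrix into a $2\times 2$ block matrix whose diagonal blocks are half-size Hankel matrices $\mathbf{p}_n^{m'}$ and whose off-diagonal blocks are either zero or rank one. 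From this one reads off in one line the exact identities
\[
d(0,2n)=d(0,n)d(1,n)-d(2,n-1)d(3,n-1),\qquad d(0,2n+1)=d(0,n+1)d(1,n)-d(2,n)d(3,n-1),
\]
and the coefficient sequences here are manifestly $2$-automatic once $d(1,\cdot),d(2,\cdot),d(3,\cdot)$ are understood.

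More importantly, the same even/odd permutation trick handles Parts~(2) and~(3) simultaneously: it yields $d(2m,2n)=d(m,n)d(m+1,n)$, $d(2m,2n+1)=d(m,n+1)d(m+1,n)$, $d(2m+1,2n)=(-1)^n d(m+1,n)^2$, $d(2m+1,2n+1)=0$ for all $m\geq 1$, and periodicity for odd $m$ and $2$-automaticity for even $m\geq 2$ then follow by induction on $m$. Your proposed route for these parts, repeated Laplace expansion toward a ``fixed-size minor that stabilizes as $n$ grows'', has a genuine gap: the number of nonzero anti-diagonals of $\mathbf{p}_n^m$ is of order $\log n$, so no bounded number of row/column deletions reduces the matrix to a fixed-size object. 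The correct structural move halves the matrix size at each step rather than bounding it, and that is precisely what the even/odd shuffle achieves.
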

\smallskip

The sequence $\mathbf{p}$ is a $2$-automatic sequence, more about the sequence $\mathbf{p}$,  please see \cite{BHMV}.
Let $C_n=\frac{1}{n+1}{2n \choose n}$ be a Catalan number, then $C_n(\bmod~2)=p(n+1)$ for all $n\geq0$. Recently, using permutation, Cigler \cite{Cigler} studied the Hankel determinants of the sequence $\{C_n(\bmod 2)\}_{n\geq0}=\{p(n+1)\}_{n\geq0}.$
By computer  experiments, they have a conjecture which have been proved by us. We state it  as follows.
\begin{thm}[Cigler \cite{Cigler}]\label{conjecture}

Assume $m$ is an integer with $2^k<m\leq2^{k+1}$ for some $k\geq1$.
\begin{enumerate}
  \item If $m=2r+1$, then $d(m,2^{k+1}n)=1$ and $d(m,2^{k+1}n-m+1)=(-1)^{r}.$
  \item If $m=2r$, then $d(m,2^{k+1}n)=d(2,2^{k+1}n)$ and $d(m,2^{k+1}n-m+1)=(-1)^{n+\epsilon_{r}}d(2,2^{k+1}n-m+1),$ where $\{\epsilon_{r}\}_{r\geq1}$ is  a sequence over $\{0,1\}$ defined by
\begin{equation*}\label{r}
\epsilon_{1}=1,~\epsilon_{2r}= (\epsilon_{r}+r)~\bmod2,~ \epsilon_{2r+1}=\epsilon_{r+1}, ~(r\geq1).
\end{equation*}
\end{enumerate}
\end{thm}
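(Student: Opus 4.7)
The plan is to reduce Theorem \ref{conjecture} to the structural information established in Theorem \ref{regular} by extracting explicit recurrences for $d(m,n)$ that come from the extreme sparsity of $\mathbf{p}$: the only nonzero values $p(j)=1$ occur at $j\in\{1,2,4,8,\ldots\}$, so each row of $\mathbf{p}_n^m$ has at most $O(\log n)$ nonzero entries, located in the positions indexed by the powers of $2$ in $[m,m+n-1]$. This sparsity permits explicit block row and column operations that reduce every $d(m,N)$ either to a signed identity or to a smaller Hankel-type determinant. The folding identities produced in this way are exactly what is needed for both halves of the conjecture.

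\emph{Odd case.} For $m=2r+1$ with $2^k<m\leq 2^{k+1}$, Theorem \ref{regular} already says $\{d(m,n)\}_{n\geq 0}$ is periodic, and I would first refine this by showing its period divides $2^{k+1}$; this follows from the fact that the local pattern of $\mathbf{p}$ around a block of length $2^{k+1}$ not straddling the range of a new power of $2$ is invariant under shift. It then suffices to evaluate $d(m,\cdot)$ at two residues. For $d(m,2^{k+1}n)=1$, I would exhibit a triangularization of $\mathbf{p}_{2^{k+1}n}^{m}$ based on the locations of the powers of $2$, whose diagonal collapses to $\pm 1$ with the sign forced to be positive by the odd parity of $m$. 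For $d(m,2^{k+1}n-m+1)=(-1)^{r}$, the nonzero entries at this size align along a single antidiagonal, and the determinant reduces to the sign of the corresponding permutation, which is $(-1)^{r}$.

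\emph{Even case and induction.} For $m=2r$ with $2^k<m\leq 2^{k+1}$, I would proceed by induction on $k$, the base case $k=0$ (so $m=2$) being tautological. For the inductive step, the key is a folding identity that reduces $d(2r,2^{k+1}n)$ and $d(2r,2^{k+1}n-m+1)$ to Hankel determinants of the form $d(r,\cdot)$ or $d(r+1,\cdot)$ at a halved index, with an explicit sign. When $r$ is even the reduction contributes an extra sign of $(-1)^{r}$, matching $\epsilon_{2r}=\epsilon_{r}+r\bmod 2$; when $r$ is odd the reduction lands on $r+1$ instead of $r$, matching $\epsilon_{2r+1}=\epsilon_{r+1}$. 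Iterating the folding down to $r=1$ then compares the outcome to $d(2,\cdot)$ exactly as the theorem requires.

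The main obstacle will be producing the folding identities with the correct signs in the even case. Establishing $d(2r,2^{k+1}n-m+1)=(-1)^{n+\epsilon_{r}}d(2,2^{k+1}n-m+1)$ requires tracking how each elementary row or column swap in the block reduction contributes to an overall sign depending simultaneously on the parity of $n$ (which comes from the shift of the starting index under halving) and on the recursive structure of $r$. Because $r$ even and $r$ odd produce genuinely different block patterns, each with its own parity contribution, matching the signs against the recursion that defines $\{\epsilon_r\}_{r\geq 1}$ is delicate; verifying this match case by case is the combinatorial heart of the argument.
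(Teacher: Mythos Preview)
Your plan has a circular dependency at its very first step: you invoke Theorem \ref{regular} to supply the periodicity of $\{d(2r+1,n)\}_{n\geq 0}$, but in the paper Theorem \ref{regular} is proved \emph{using} Theorem \ref{conjecture} (together with Proposition \ref{formula4}). So that route is unavailable, and your argument for the period dividing $2^{k+1}$ via ``local-pattern shift-invariance'' does not stand on its own --- as $n$ grows by $2^{k+1}$ the window $[m,m+2n-2]$ acquires new powers of $2$, so no such invariance holds.

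More substantively, the ``folding identities'' you hope to produce are exactly the content of Lemma \ref{formula1}, which the paper obtains by a single clean device: conjugating $\mathbf{p}_N^m$ by the permutation that separates odd- from even-indexed rows and columns (equation (\ref{equation2})). Because $p(2n+1)=0$ for $n\geq 1$ and $p(2n)=p(n)$, this splits the Hankel matrix into a $2\times 2$ block form whose blocks are again Hankel matrices of half the size, yielding closed recurrences such as $d(2r+1,2s)=(-1)^s d^2(r+1,s)$, $d(2r,2s)=d(r,s)d(r+1,s)$, and $d(2r,2s+1)=d(r,s+1)d(r+1,s)$. With these in hand the induction on $k$ is a direct three-case check ($m=2r+1$, $m=4r+2$, $m=4r$), and the $\epsilon$-recursion falls out mechanically --- your observation that even and odd $r$ route through $\epsilon_r$ versus $\epsilon_{r+1}$ is exactly this trichotomy. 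What your sketch offers instead (an unspecified triangularization for $d(m,2^{k+1}n)$, a ``single antidiagonal'' for $d(m,2^{k+1}n-m+1)$) is neither justified nor obviously correct: the nonzero entries of $\mathbf{p}_N^m$ lie on \emph{several} antidiagonals $i+j=2^\ell-m$, not one.

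In short: the inductive architecture and the link between the parity of $r$ and the $\epsilon$-recursion are right, but the engine you are missing is Lemma \ref{formula1}. Once you have it, the sign bookkeeping you flag as ``the combinatorial heart'' becomes a routine verification rather than a delicate one.
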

\smallskip

The paper is organized as follows.
In Section \ref{sec.2},  we recall some notation briefly.
In Section \ref{sec.3},  we  give a proof of Theorem \ref{polynomial1} and Theorem \ref{polynomial2}.
In the lase section, we study the Hankel determinants $d(m,n)$ and answer this conjecture by a precise recurrence formula of the determinants $d(m,n)$ for $m\geq0,n\geq1$. At last, we give a proof of Theorem \ref{regular}.

\section{Preliminaries}\label{sec.2}
In this section, we briefly recall some notation and theorems. More notation, please see \cite{AS03}.

In this paper, we always denote the set of non-negative integers by $\mathbb{N}$ and denote the set of integers by $\mathbb{Z}$. For any two set $A, B$ and number $c$, define $A+B=\{a+b:a\in A,b\in B\}, AB=\{ab:a\in A,b\in B\}$ and $cA=\{ca:a\in A\}.$ We define $A^0=1$ and $A^i=AA^{i-1}, A^{-i}=\emptyset$ for any $i\geq1$. Assume $f$ is map defined on $A$, then the \emph{image} of $A$ under $f$ is denoted by $f(A)$, i.e., $f(A)=\{f(a): a\in A\}.$ In particular, if $f(A)\subset A$, then we say that the set $A$ is \emph{invariant}  under the map $f$.

We always call a finite set $\Sigma$ \emph{alphabet} and its elements \emph{letters}.  A word is made up of letters by the operation of concatenation. We denote the set of all finite words by $\Sigma^*$, including empty word $\epsilon$. Together with the operation of concatenation,  $\Sigma^*$ forms a free monoid.  An \emph{infinite sequence}, denote by $\mathbf{u}=\{u(n)\}_{n\geq0}=u(0)u(1)u(2)\cdots$, is a map from $\mathbb{N}$ to $\Sigma$. The set of all infinite sequences over $\Sigma$ is denoted by $\Sigma^{\mathbb{N}}.$

Among the infinite sequences, $k$-automatic and $k$-regular sequences satisfy a variety of useful properties. We recall some results from \cite{AS1992,jp}.
\begin{thm}\label{relation}(\cite{AS1992}, Theorem 2.3)
A sequence is $k$-regular and takes on only finitely many values if and only if it is $k$-automatic.
\end{thm}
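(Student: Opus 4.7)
The plan is to establish both directions of the equivalence. The forward implication is essentially a definitional unpacking: if $\mathbf{u}$ is $k$-automatic, then by definition $\mathbf{u}$ takes values in a finite alphabet, and the $k$-kernel $\mathcal{K}_k(\mathbf{u})$ is finite. Since a finite subset of $\mathcal{R}$ always generates a finitely generated $\mathbb{Z}$-module (with itself as a generating set), $\mathbf{u}$ is $k$-regular.

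The reverse direction is the substantial content. Suppose $\mathbf{u}=\{u(n)\}_{n\geq0}$ is $k$-regular and $\Sigma:=\{u(n):n\geq0\}$ is finite. The first step is to fix a generating set for the $\mathbb{Z}$-module $M$ spanned by $\mathcal{K}_k(\mathbf{u})$ whose elements lie inside $\mathcal{K}_k(\mathbf{u})$ itself. Starting from any finite generating set of $M$, expand each generator as an integer combination of kernel elements and collect the finitely many kernel elements that appear; throwing $\mathbf{u}$ in as well, one obtains $\mathbf{v}_1,\ldots,\mathbf{v}_N \in \mathcal{K}_k(\mathbf{u})$ that $\mathbb{Z}$-span $M$, with $\mathbf{u}=\mathbf{v}_1$.

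The second step is to linearise the decimation action. For each $j\in\{0,1,\ldots,k-1\}$, the shifted subsequence $\sigma_j(\mathbf{v}_i):=\{v_i(kn+j)\}_{n\geq0}$ again lies in $\mathcal{K}_k(\mathbf{u})\subset M$, so it expands as $\sigma_j(\mathbf{v}_i)=\sum_{\ell=1}^{N}c^{(j)}_{i,\ell}\mathbf{v}_\ell$ for some integers $c^{(j)}_{i,\ell}$. Assembling the generators into the column vector $V(n)=(v_1(n),\ldots,v_N(n))^T$, these relations translate to $V(kn+j)=A_j V(n)$ for the integer matrix $A_j=(c^{(j)}_{i,\ell})_{i,\ell}$, and iterating along the base-$k$ digits of $n$ one obtains $V(n)=A_{d_s}\cdots A_{d_1}V(0)$.

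The key finiteness step is now immediate: since each $\mathbf{v}_i$ is a subsequence of $\mathbf{u}$, the image of $V$ lies in $\Sigma^N$, which is finite. Hence $n\mapsto V(n)$ is a finite-valued sequence computed by a deterministic $k$-automaton whose state set is the finite subset $\{V(n):n\geq0\}\subset\Sigma^N$ and whose transitions are induced by the matrices $A_j$. Each coordinate is therefore $k$-automatic, and in particular $\mathbf{u}=\mathbf{v}_1$ is $k$-automatic, so $\mathcal{K}_k(\mathbf{u})$ is finite. The main obstacle is the generator-selection step: one must guarantee the generators themselves lie in the kernel (not merely in its $\mathbb{Z}$-span), because the finite-valuedness of the sequences $\mathbf{v}_i$ is precisely what forces the state set $\{V(n):n\geq0\}$ to be finite. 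Once this is arranged, the remainder of the argument is a routine translation between the matrix description and the automaton picture.
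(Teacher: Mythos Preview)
Your argument is correct and is essentially the standard proof given in Allouche--Shallit \cite{AS1992}. Note, however, that the present paper does not supply its own proof of this theorem: it merely quotes the statement and cites \cite{AS1992}, Theorem~2.3. So there is no ``paper's proof'' to compare against here; your write-up simply reproduces the cited result.

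One small slip worth fixing: the matrix product you write as $V(n)=A_{d_s}\cdots A_{d_1}V(0)$ should read $V(n)=A_{d_0}A_{d_1}\cdots A_{d_s}V(0)$, where $n=(d_s\cdots d_1d_0)_k$; the recursion $V(kn+j)=A_jV(n)$ peels off the least significant digit, so the factor $A_{d_0}$ is applied last and must appear on the left. This does not affect the argument, since the automaton is obtained directly from the recursion and the finiteness of $\{V(n):n\geq 0\}\subset\Sigma^N$, not from the explicit product formula.
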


\begin{thm}\label{cloure}(\cite{AS1992}, Theorem 2.5)
Let $\{u(n)\}_{n\geq0}$ and $\{v(n)\}_{n\geq0}$ be $k$-regular sequences. Then so are $\{u(n)+v(n)\}_{n\geq0}$, $\{u(n)v(n)\}_{n\geq0}$ and $\{cu(n)\}_{n\geq0}$ for any $c$.

\end{thm}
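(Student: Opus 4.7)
The plan is to prove all three closure properties directly from the module-theoretic definition of $k$-regularity recalled in the introduction. Fix integer sequences $\mathbf{u}$ and $\mathbf{v}$ and write their $k$-kernels as being contained in the $\mathbb{Z}$-modules spanned by finite sets $a_1,\ldots,a_N$ and $b_1,\ldots,b_M$ respectively; that is, for every pair $(i,j)$ with $i\geq 0$ and $0\leq j<k^i$ there are integers $\alpha_{i,j,\ell}$ and $\beta_{i,j,m}$ with
\[
u(k^in+j)=\sum_{\ell=1}^{N}\alpha_{i,j,\ell}\,a_{\ell}(n),\qquad v(k^in+j)=\sum_{m=1}^{M}\beta_{i,j,m}\,b_{m}(n).
\]

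For the sum $\mathbf{u}+\mathbf{v}$, every kernel element $\{u(k^in+j)+v(k^in+j)\}_{n\geq 0}$ is a $\mathbb{Z}$-linear combination of $\{a_1,\ldots,a_N,b_1,\ldots,b_M\}$ by adding the two expansions above, so $\mathcal{K}_k(\mathbf{u}+\mathbf{v})$ lies in the module generated by these $N+M$ sequences. For the scalar multiple $c\mathbf{u}$, each kernel element is $c$ times an element of $\mathcal{K}_k(\mathbf{u})$, so scaling the integer coefficients in the expansion of $u(k^in+j)$ by $c$ exhibits it as a $\mathbb{Z}$-linear combination of the original $a_1,\ldots,a_N$. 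Both cases are immediate.

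The substantive case is the product $\mathbf{u}\cdot\mathbf{v}$. Using the bilinearity of the pointwise product in the commutative ring $(\mathcal{R},+,\cdot)$, I multiply the two displayed expansions to obtain
\[
u(k^in+j)\,v(k^in+j)=\sum_{\ell=1}^{N}\sum_{m=1}^{M}\alpha_{i,j,\ell}\beta_{i,j,m}\,\bigl(a_\ell(n)\,b_m(n)\bigr).
\]
Thus every element of $\mathcal{K}_k(\mathbf{u}\cdot\mathbf{v})$ lies in the $\mathbb{Z}$-module generated by the $NM$ sequences $\{a_\ell\cdot b_m:1\leq\ell\leq N,\ 1\leq m\leq M\}$, proving that $\mathbf{u}\cdot\mathbf{v}$ is $k$-regular.

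The main point to be careful about is the product case: one needs that the coefficients in the two expansions depend only on $(i,j)$ (i.e.\ are constants in $n$), so that the pointwise distribution really produces a finite $\mathbb{Z}$-linear combination of the fixed sequences $a_\ell\cdot b_m$ rather than one whose coefficients vary with $n$. This is guaranteed by the definition of $k$-regularity, and given that observation, the proof reduces to routine bookkeeping of the three bilinear/linear manipulations above.
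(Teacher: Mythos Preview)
The paper does not supply its own proof of this theorem; it is quoted in the preliminaries with a citation to Allouche--Shallit (\cite{AS1992}, Theorem~2.5) and used as a black box. Your argument is correct and is exactly the standard module-theoretic proof one finds in that reference: exhibit finite generating sets for the two $k$-kernels and observe that the kernel of $\mathbf{u}+\mathbf{v}$ lies in the span of their union, the kernel of $c\mathbf{u}$ in the span of the original generators, and the kernel of $\mathbf{u}\cdot\mathbf{v}$ in the span of the $NM$ pointwise products $a_\ell\cdot b_m$. There is nothing to add or correct.
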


\begin{thm}\label{non-regular}(\cite{AS1992}, Theorem 2.10)
Let $\{u(n)\}_{n\geq0}$ be a k-regular sequence with values in $\mathbb{C}$, the set of complex numbers. Then there exists a constant $c$ such that $u(n)=\mathcal{O}(n^c).$
\end{thm}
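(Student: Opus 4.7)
The plan is to convert the $k$-regularity of $\mathbf{u}$ into a linear matrix recursion indexed by the base-$k$ digits of $n$, and then bound $|u(n)|$ by the norm of a product of $O(\log n)$ matrices. Let $V$ denote the $\mathbb{C}$-linear span of the $k$-kernel $\mathcal{K}_k(\mathbf{u})$. By hypothesis $V$ is finitely generated as a $\mathbb{C}$-module, hence a finite-dimensional vector space; fix a basis $v_1,\dots,v_N$ of $V$.

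First I would verify that $V$ is invariant under each decimation operator $D_j\colon\mathbf{w}\mapsto\{w(kn+j)\}_{n\geq0}$ for $0\leq j<k$. Any generator of $V$ has the form $\{u(k^{i}n+j')\}_{n\geq0}$ for some $i\geq0$ and $0\leq j'<k^{i}$, and applying $D_j$ produces $\{u(k^{i+1}n+k^{i}j+j')\}_{n\geq0}$, which is again in $\mathcal{K}_k(\mathbf{u})\subset V$. By linearity $D_j$ sends $V$ into $V$. Writing the vector-valued sequence $V(n):=(v_1(n),\dots,v_N(n))^{T}$, there then exist matrices $A_0,\dots,A_{k-1}\in\mathbb{C}^{N\times N}$ representing the $D_j$ in the chosen basis, giving
\begin{equation*}
V(kn+j)=A_j V(n),\qquad 0\leq j<k,\ n\geq 0.
\end{equation*}

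Iterating this identity along the base-$k$ expansion of $n\geq 1$, if $n=d_{\ell-1}k^{\ell-1}+\cdots+d_1 k+d_0$ with $d_{\ell-1}\neq0$, a straightforward induction on $\ell$ gives
\begin{equation*}
V(n)=A_{d_0}A_{d_1}\cdots A_{d_{\ell-1}}V(0),
\end{equation*}
a product of exactly $\ell=\lfloor\log_k n\rfloor+1$ matrices from the finite collection $\{A_0,\dots,A_{k-1}\}$. Fix any submultiplicative matrix norm $\|\cdot\|$ on $\mathbb{C}^{N\times N}$ and set $M:=\max_{0\leq j<k}\|A_j\|$. Then $\|V(n)\|\leq M^{\ell}\|V(0)\|$, and since $\ell\leq 1+\log_k n$ this yields $\|V(n)\|=O\!\bigl(n^{\log_k M}\bigr)$. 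Writing $\mathbf{u}\in V$ as $\mathbf{u}=\sum_{i=1}^{N}c_i v_i$ and combining with $|v_i(n)|\leq\|V(n)\|_{\infty}$, we conclude $|u(n)|=O(n^{c})$ for any $c\geq\log_k M$.

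The only conceptually nontrivial step is the first one: showing that the decimation operators $D_j$ preserve the finite-dimensional space $V$. A priori the $k$-kernel of an individual basis sequence $v_i$ need not sit inside $V$, so one cannot simply invoke $k$-regularity of the $v_i$; what is actually used is that a \emph{single} decimation of an element of $\mathcal{K}_k(\mathbf{u})$ stays in $\mathcal{K}_k(\mathbf{u})$, which follows directly from the definition. Once this invariance is in place, everything else is the routine observation that $n$ has $O(\log n)$ digits in base $k$, so a length-$O(\log n)$ product of matrices bounded in norm produces at most polynomial growth in $n$.
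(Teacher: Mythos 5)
Your proof is correct. Note that the paper itself gives no proof of this statement---it is quoted verbatim from Allouche and Shallit (Theorem 2.10 of the cited reference)---and your argument is essentially the standard one used there: pass to the finite-dimensional space spanned by the $k$-kernel, check it is stable under the decimations $\mathbf{w}\mapsto\{w(kn+j)\}_{n\geq0}$ (the step you rightly flag as the only nontrivial one, and which you verify correctly), and bound $u(n)$ by a product of $\lfloor\log_k n\rfloor+1$ matrices drawn from a finite set. The only cosmetic quibble is that the inequality $\|A_jV(n)\|\leq\|A_j\|\,\|V(n)\|$ tacitly uses a vector norm compatible with your matrix norm, and one should take $c\geq\max(0,\log_k M)$ to cover the case $M<1$; neither affects the validity of the argument.
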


\begin{thm}\label{linear}(\cite{jp}, Theorem 6)
Let $\{u(n)\}_{n\geq0}$ be a sequence with values in a Noetherian ring $R$. Suppose there exist integers $k\geq2, t, r, n_0$ such that each sequence $\{u(k^{t+1}n+e)\}_{n\geq n_0}$ for $0\leq e<k^{t+1}$ is a linear combination of the sequences $\{u(k^in+j)\}_{n\geq n_0}$ with $0\leq i\leq t, 0\leq j<k^i$ and the sequences $\{u(n+p)\}_{n\geq n_0}$ with $0\leq p\leq r$. Then the sequence $\{u(n)\}_{n\geq0}$ is $k$-regular.
\end{thm}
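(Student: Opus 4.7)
The plan is to construct an explicit finite set $\mathcal{S}$ of sequences whose $R$-span $M$ contains the entire $k$-kernel $\mathcal{K}_k(\mathbf{u})$. Because $R$ is Noetherian, any submodule of a finitely generated $R$-module is itself finitely generated, so this bound will immediately imply that $\mathbf{u}$ is $k$-regular. Throughout, I write $\Psi_s$ for the section operator $(\Psi_s\mathbf{v})(n)=v(kn+s)$, $T$ for the shift $(T\mathbf{v})(n)=v(n+1)$, and $\chi_m$ for the characteristic sequence of $\{m\}$; every element of $\mathcal{K}_k(\mathbf{u})$ has the form $\Psi_{s_1}\cdots\Psi_{s_d}\mathbf{u}$, and the elementary identity $\Psi_s T^p=T^q\Psi_{s'}$ (whenever $s+p=kq+s'$ with $0\leq s'<k$) will be used repeatedly.

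First I would fix integer bounds $P_0,P_1,\ldots,P_t$ to be determined and declare
$$\mathcal{S}=\Big\{T^p\Psi_{s_1}\cdots\Psi_{s_i}\mathbf{u}:0\leq i\leq t,\ (s_1,\ldots,s_i)\in\{0,\ldots,k-1\}^i,\ 0\leq p\leq P_i\Big\}\cup\{\chi_m:0\leq m<n_0\},$$
and let $M=\operatorname{span}_R(\mathcal{S})$. The central step is to show $\Psi_s(M)\subseteq M$ for each $0\leq s<k$, which splits into three cases. (i) $\Psi_s\chi_m=\chi_{(m-s)/k}$ when $s\leq m$ and $k\mid m-s$, and $0$ otherwise, so it remains in $M$. (ii) If $i<t$, then $\Psi_s(T^p\Psi_{s_1}\cdots\Psi_{s_i}\mathbf{u})=T^q\Psi_{s'}\Psi_{s_1}\cdots\Psi_{s_i}\mathbf{u}$ with $q\leq(P_i+k-1)/k$, which lies in $\mathcal{S}$ provided $P_{i+1}\geq(P_i+k-1)/k$. (iii) If $i=t$ the same manipulation yields a depth-$(t+1)$ kernel element shifted by $q$; at this point I invoke the hypothesis with $e=k^t s_1+\cdots+k s_t+s'$ to write
$$u(k^{t+1}n+e)=\sum_{0\leq i'\leq t,\,0\leq j<k^{i'}}c_{i',j}\,u(k^{i'}n+j)+\sum_{0\leq p\leq r}d_p\,u(n+p)\qquad(n\geq n_0),$$
shift this identity by $q$, and correct the finite discrepancy on $\{0,\ldots,n_0-q-1\}$ by an $R$-linear combination of $\chi_m$ with $m<n_0$. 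The right-hand side then lies in $M$ provided $q\leq P_{i'}$ for every $i'\leq t$ and $q+r\leq P_0$, which amounts to $P_0\geq(P_t+k-1)/k+r$.

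Finally I would verify that the system $P_{i+1}\geq(P_i+k-1)/k$ for $0\leq i<t$ together with $P_0\geq(P_t+k-1)/k+r$ has a finite integer solution (iterating gives $P_0\geq P_0/k^{t+1}+O(r+k)$, which is solvable because $k\geq 2$). With the $P_i$ so chosen, $\mathcal{S}$ is a finite set, $\mathbf{u}\in\mathcal{S}\subseteq M$, and the stability of $M$ under each $\Psi_s$ gives $\mathcal{K}_k(\mathbf{u})\subseteq M$ by induction on the length $d$ of the word $(s_1,\ldots,s_d)$; the Noetherian hypothesis then upgrades this to finite generation of the submodule spanned by $\mathcal{K}_k(\mathbf{u})$, establishing $k$-regularity. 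I expect the main technical obstacle to be the bookkeeping in case (iii) of Step 2: one has to track carefully how the shift $T^q$ interacts with the threshold $n_0$ in the hypothesis, and confirm that the characteristic-sequence corrections used to repair the initial segment never spill outside the prefixed range $\{0,\ldots,n_0-1\}$ after further applications of $\Psi_s$, so that $M$ really is closed and no new generators need to be adjoined mid-argument.
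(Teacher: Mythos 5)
The paper does not actually prove this statement---it is quoted verbatim, with citation, from Allouche and Shallit (\emph{The ring of $k$-regular sequences, II}, Theorem 6)---and your argument is correct and is essentially the standard proof from that source: exhibit a finite set of generators (shifted kernel elements of depth at most $t$ together with finitely many characteristic sequences $\chi_m$, $m<n_0$, to absorb the initial-segment discrepancies) whose span is stable under every section operator $\Psi_s$, then conclude via the Noetherian hypothesis. The only loose ends are bookkeeping ones you already flag: in case (iii) you need $q\leq P_{i'}$ for \emph{every} level $0\leq i'\leq t$, not only the condition $P_0\geq (P_t+k-1)/k+r$, and the spill-over worry about the $\chi_m$ is vacuous since $\Psi_s\chi_m$ is either $0$ or $\chi_{(m-s)/k}$ with $(m-s)/k\leq m<n_0$; both are settled at once by taking all $P_i$ equal to a single integer $P\geq 1+\lceil kr/(k-1)\rceil$.
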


\section{Polynomial generated sequence}\label{sec.3}
In this section, we study the polynomial generated sequences and give a proof of Theorem \ref{polynomial1} and Theorem \ref{polynomial2}.
\smallskip

Let $f_i(x)=\mathbf{a}_ix+\mathbf{b}_i$ be sequence polynomials, where $\mathbf{a}_i=\{a_i(n)\}_{n\geq0}, \mathbf{b}_i=\{b_i(n)\}_{n\geq0}$ for $0\leq i< k.$
Let
$$\mathcal{A}=\mathcal{K}_{k}(\mathbf{a}_0)\cup\mathcal{K}_{k}(\mathbf{a}_1)\cup\cdots\cup\mathcal{K}_{k}(\mathbf{a}_{k-1})$$ and $$\mathcal{B}=\mathcal{K}_{k}(\mathbf{b}_0)\cup\mathcal{K}_{k}(\mathbf{b}_1)\cup\cdots\cup\mathcal{K}_{k}(\mathbf{b}_{k-1}).$$

\smallskip
To prove Theorem \ref{polynomial1}, we need the following lemma.
\begin{lem}\label{finitely}
Let $\mathcal{S}$ be a finite set of integer sequences which take finitely many values, then the set
$\bigcup_{i\geq0}\mathcal{S}^i$
is finitely generated. Moreover, if the sequences of $\mathcal{S}$ take values in $\{-1,0,1\}$, then $\bigcup_{i\geq0}\mathcal{S}^i$ is a finite set.
\end{lem}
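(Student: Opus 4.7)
The plan is to reduce both claims to the observation that every product sequence depends on $n$ only through the simultaneous values at position $n$ of the finitely many generators in $\mathcal{S}$.

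First I would enumerate $\mathcal{S}=\{\mathbf{s}_1,\dots,\mathbf{s}_m\}$ and, letting $V_j\subset\mathbb{Z}$ be the (finite) value set of $\mathbf{s}_j$, define the ``state'' map
\[
T\colon\mathbb{N}\longrightarrow V_1\times\cdots\times V_m,\qquad T(n)=(\mathbf{s}_1(n),\dots,\mathbf{s}_m(n)).
\]
Since each $V_j$ is finite, $T$ takes only finitely many values $\tau_1,\dots,\tau_N$, and the preimages $B_\ell:=T^{-1}(\tau_\ell)$ partition $\mathbb{N}$ into $N$ classes. The key observation is that for any multi-index $(j_1,\dots,j_i)$ with entries in $\{1,\dots,m\}$, the sequence $\mathbf{s}_{j_1}\cdots\mathbf{s}_{j_i}$ evaluated at $n$ is a function of $T(n)$ alone; hence it is constant on each $B_\ell$, with value $\tau_\ell(j_1)\cdots\tau_\ell(j_i)$ on $B_\ell$ (reading $\tau_\ell$ as an $m$-tuple).

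Consequently every element of $\bigcup_{i\geq 0}\mathcal{S}^i$ can be written as $\sum_{\ell=1}^{N}c_\ell\,\chi_{B_\ell}$ with integer coefficients $c_\ell$, where $\chi_{B_\ell}$ is the characteristic function of $B_\ell$. This exhibits $\bigcup_{i\geq 0}\mathcal{S}^i$ as a subset of the $\mathbb{Z}$-module generated by the finite family $\{\chi_{B_1},\dots,\chi_{B_N}\}$, proving the first assertion.

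For the strengthening, if all $\mathbf{s}_j$ take values in $\{-1,0,1\}$, then every componentwise product of such sequences again takes values in $\{-1,0,1\}$. Since the sequence $\mathbf{s}_{j_1}\cdots\mathbf{s}_{j_i}$ is determined by the function $\tau\mapsto\tau(j_1)\cdots\tau(j_i)$ on the image of $T$ (a set of size at most $3^m$), the sequence itself is determined by a function from this finite set to $\{-1,0,1\}$, and there are only finitely many such functions; hence $\bigcup_{i\geq 0}\mathcal{S}^i$ is finite. The argument is essentially bookkeeping, so I do not anticipate a serious obstacle; the only point to watch is to treat $\mathcal{S}^0=\{\mathbf{1}\}$ explicitly so that the constant sequence $1$ is written as $\sum_{\ell=1}^N\chi_{B_\ell}$ and thus lies in the stated module.
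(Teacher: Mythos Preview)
Your proof is correct and rests on the same underlying idea as the paper's: every product sequence is determined by the level sets of the generators, so it lies in the span of finitely many indicator functions. The packaging differs slightly. The paper writes each $\mathbf{u}\in\mathcal{S}$ as $\sum_j a_j\chi_{\mathbf{u}}^j$ with $\chi_{\mathbf{u}}^j$ the indicator of $\{n:\mathbf{u}(n)=a_j\}$, then uses the idempotence $(\chi_{\mathbf{u}}^j)^k=\chi_{\mathbf{u}}^j$ to bound the generators of $\bigcup_i\mathcal{S}^i$ by finitely many products of such indicators; your state map $T$ jumps directly to the atoms $B_\ell$ of the common refinement of all these level-set partitions, which are exactly the nonzero products $\prod_j\chi_{\mathbf{s}_j}^{i_j}$. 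For the $\{-1,0,1\}$ statement the paper instead invokes the algebraic fact that $\mathbf{u}^k\in\{\mathbf{u},\mathbf{u}^2\}$ for all $k\ge1$, so each monomial $\mathbf{s}_1^{n_1}\cdots\mathbf{s}_m^{n_m}$ lands in a set of at most $3^m$ products, whereas you count functions from $\mathrm{im}(T)$ to $\{-1,0,1\}$. Both arguments are short and valid; yours is arguably a touch more conceptual, while the paper's second part gives an explicit (and slightly sharper) description of the finite set.
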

\begin{proof}
Let $\mathbf{u}$ be an integer sequence over the alphabet $\{a_0,a_1,\cdots,a_M\}$, we define the characteristic sequences $\chi_\mathbf{u}^j$ for $0\leq j\leq M$, by
  $$\chi_\mathbf{u}^j(n)=\begin{cases}
   1   & \text{if $u(n)=a_j$ }, \\
   0  & \text{otherwise}.
\end{cases}$$
 Then $\mathbf{u}$ is a $\mathbb{Z}$-linear combination of a finite set $\{\chi_\mathbf{u}^0,\chi_\mathbf{u}^1,\cdots,\chi_\mathbf{u}^M\}$, i.e.,
  $$\mathbf{u}=a_0\chi_\mathbf{u}^0+a_1\chi_\mathbf{u}^1+\cdots+a_M\chi_\mathbf{u}^M.$$

  Note that $\chi_\mathbf{u}^j$ are binary sequences, so we have $(\chi_\mathbf{u}^j)^k=\chi_\mathbf{u}^j$ for all $k\geq1$. Hence, $\mathbf{u}^k$ is a $\mathbb{Z}$-linear combination of at most $2^{M+1}$ sequences which are of the  form
 $$(\chi_\mathbf{u}^0)^{i_0}(\chi_\mathbf{u}^1)^{i_1}\cdots(\chi_\mathbf{u}^M)^{i_M},$$ where $i_j\in\{0,1\}$ for $0\leq j\leq M$.

Now, assume $\mathcal{S}=\{\mathbf{u}_0,\mathbf{u}_1,\cdots,\mathbf{u}_N\}$. Then, for $i\geq0$, we have
$$\mathcal{S}^i=\{\mathbf{u}_0^{n_0}\mathbf{u}_1^{n_1}\cdots\mathbf{u}_N^{n_N}: n_j\geq0 ~\text{for all}~0\leq j\leq N~\text{and}~n_0+n_1+\cdots+n_N=i\}.$$
So, each sequence $\mathbf{u}_i^{n_i}$ of $\mathcal{S}^i$ is a $\mathbb{Z}$-linear combination of a finite number of sequences. Hence, the set $\bigcup_{i\geq0}\mathcal{S}^i$ is finitely generated.

In particular, if a sequence $\mathbf{u}$ takes values in $\{-1,0,1\}$, then $\mathbf{u}^k\in\{\mathbf{u},\mathbf{u}^2\}$ for all $k\geq1$. Hence, if the sequences of $\mathcal{S}$ take values in $\{-1,0,1\}$, then $\mathbf{u}_i^k\in\{\mathbf{u}_i,\mathbf{u}_i^2\}$ for all $k\geq1$ for each $0\leq i\leq N$. Thus, $\bigcup_{i\geq0}\mathcal{S}^i$ is a finite set.
\end{proof}

\begin{proof}[Proof of Theorem \ref{polynomial1}]
Assume $\mathbf{u}=\mathcal{G}(f_0,f_1,\cdots,f_{k-1})$. Then,  we have $u(kn+i)=a_i(n)u(n)+b_i(n)$ for $0\leq i< k, n\geq0$. Hence,
$$\mathcal{K}_k(\mathbf{u})\subset\bigcup_{i\geq0}\left(\mathcal{A}^i\mathbf{u}+\mathcal{B}\sum_{s=0}^{i-1}\mathcal{A}^{s}\right).$$

Since the sequences $\mathbf{a}_i=\{a_i(n)\}_{n\geq0}$ are $k$-automatic and $\mathbf{b}_i=\{b_i(n)\}_{n\geq0}$ are $k$-regular for $0\leq i< k$,  we know that the set $\mathcal{A}$ is finite and the set $\mathcal{B}$ is finitely generated. Hence, by Lemma \ref{finitely},  $\bigcup_{i\geq0}\mathcal{A}^i\mathbf{u}$ and $\bigcup_{i\geq0}\mathcal{B}(\sum_{s=0}^{i-1}\mathcal{A}^{s})$ are finitely generated by some finite set. Thus, the $k$-kernel $\mathcal{K}_k(\mathbf{u})$ is finitely generated, which implies that the polynomial generated sequence $\mathcal{G}(f_0,f_1,\cdots,f_{k-1})$ is $k$-regular.
\end{proof}

\smallskip

Now, using  Lemma \ref{finitely}, we  prove Theorem \ref{polynomial2}.
\begin{proof}[Proof of Theorem \ref{polynomial2}]
Let  $\mathbf{u}=\{u(n)\}_{n\geq0}$ be a polynomial generated sequence with polynomials $f_0,f_1,\cdots,f_{k-1}$ and shifts $S_i$ for $0\leq i<k.$
Since $S_i\in\{S^j:j\geq0\}$, we assume $S_i=S^{j_i}$, where $j_i\geq0$ are integers for $0\leq i<k$.
By definition \ref{defi2}, for any $0\leq i<k$ and $n\geq N$, we have
\begin{equation}
\label{shift-equation}
u(kn+i)=a_i(n)u(n+j_i)+b_i(n).
\end{equation}

Note that there are at most $kN$ terms of $\mathbf{u}$ which do not satisfy above formula (\ref{shift-equation}). For $0\leq i<k$, define a ultimately periodic sequence $\mathbf{c}_i=\{c_i(n)\}_{n\geq0}$ by
$$c_i(n)=\begin{cases}
u(kn+i)- a_i(n)u(n+j_i)-b_i(n),  & \text{if $0\leq n<N$}, \\
 0,     & \text{otherwise}.
\end{cases}$$
Then, for all $n\geq0$,
$$u(kn+i)=a_i(n)u(n+j_i)+b_i(n)+c_i(n),~(0\leq i<k).$$

Let $r=max\{j_i:0\leq i<k\}$ and $Q$ is a fixed number satisfying $Q \geq \frac{k(1+r)}{k-1}$. Define  sets
\begin{eqnarray*}
\mathcal{U}     & = & \Big\{\{u(n+t)\}_{n\geq0}:0\leq t\leq Q\Big\}, \\
\mathcal{A}_Q & = & \bigcup_{s=0}^{k-1}\left\{\{a_s(k^i(n+t)+j)\}_{n\geq0}:i\geq0, 0\leq j<k^i, 0\leq t\leq Q\right\},\\
\mathcal{B}_Q & = & \bigcup_{s=0}^{k-1}\left\{\{b_s(k^i(n+t)+j)\}_{n\geq0}:i\geq0, 0\leq j<k^i, 0\leq t\leq Q\right\},\\
\mathcal{C}_Q & = & \bigcup_{s=0}^{k-1}\left\{\{c_s(k^i(n+t)+j)\}_{n\geq0}:i\geq0, 0\leq j<k^i, 0\leq t\leq Q\right\}.
\end{eqnarray*}

We claim that,
$$\mathcal{K}_k(\mathbf{u})\subset\bigcup_{i\geq0}\left(\mathcal{A}_Q^i\mathcal{U}+(\mathcal{B_Q}+\mathcal{C_Q})\sum_{s=0}^{i-1}\mathcal{A}_Q^{s}\right).$$

To prove this claim, we need some maps. For $0\leq \ell<k$, define $\psi_{\ell}:\{w(n)\}_{n\geq0}\rightarrow \{w(kn+\ell)\}_{n\geq0}$. Then, we only need to show that $\psi_{\ell}(\mathcal{U})\subset\mathcal{A}_Q\mathcal{U}+\mathcal{B}_Q+\mathcal{C}_Q$ and  the sets $\mathcal{A}_Q,\mathcal{B}_Q$ and $\mathcal{C}_Q$ are invariant under the maps $\psi_{\ell}.$

For any $0\leq t\leq Q$ and $0\leq \ell<k$, assume $t+\ell=kx+y$, where $0\leq y<k$.  So, by the choice of $Q$ and note that $0\leq j_y\leq r$, we have
$$0\leq x\leq  x+j_y\leq \frac{t+\ell}{k}+r\leq\frac{k-1+Q}{k}+r\leq Q.$$
Then,  we have the following cases.
\begin{itemize}
  \item  $u(kn+\ell+t)=u(k(n+x)+y)=a_y(n+x)u(n+x+j_y)+b_y(n+x)+c_y(n+x)$, so we have $\psi_{\ell}(\mathcal{U})\subset\mathcal{A}_Q\mathcal{U}+\mathcal{B}_Q+\mathcal{C}_Q;$
  \item For any $i\geq0, 0\leq j<k^i$, we have $a_s(k^i(kn+\ell+t)+j)=a_s(k^i(k(n+x)+y)+j)=a_s(k^{i+1}(n+x)+k^iy+j)$. Note that $0\leq k^iy+j<k^{i+1},$ we have $\psi_{\ell}(\mathcal{A}_Q)\subset\mathcal{A}_Q.$ In the same way, $\psi_{\ell}(\mathcal{B}_Q)\subset\mathcal{B}_Q$ and $\psi_{\ell}(\mathcal{B}_Q)\subset\mathcal{B}_Q.$
\end{itemize}
Hence, our claim holds.

If  the sequences $\mathbf{a}_i=\{a_i(n)\}_{n\geq0}$ are $k$-automatic and $\mathbf{b}_i=\{b_i(n)\}_{n\geq0}$ are $k$-regular for $0\leq i< k$, then the set $\mathcal{A}_Q$ is finite and the set $\mathcal{B}_Q$ is finitely generated. Note that the sets $\mathcal{U}$ and $\mathcal{C}_Q$ are finite, by Lemma \ref{finitely}, there exists a finite set $\mathcal{S}$ such that the sets $$\bigcup_{i\geq0}\left(\mathcal{A}_Q^i\mathcal{U}+(\mathcal{B_Q}+\mathcal{C_Q})\sum_{s=0}^{i-1}\mathcal{A}_Q^{s}\right)$$ are $\mathbb{Z}$-linear combination of $\mathcal{S}$. Hence, the $k$-kernel $\mathcal{K}_k(\mathbf{u})$ is finitely generated and our theorem follows.
\end{proof}

\begin{remark}
Note from the proof of Theorem \ref{polynomial2} that the condition of Definition \ref{poly1}  can be weakened by
$u(kn+i)=f_i(\mathbf{u})(n)~(n\geq N)$
for some integer $N\geq0$. Denote
$$\mathcal{G}_{N}(f_0,f_1,\cdots,f_{k-1})=\Big\{\{u(n)\}_{n\geq0}: u(kn+i)=f_i(\mathbf{u})(n), n\geq N ~\text{for}~0\leq i\leq k\Big\}.$$
If $f_i=\mathbf{a}_ix+\mathbf{b}_i$, where $\mathbf{a}_i$ are $k$-automatic and $\mathbf{b}_i$ are $k$-regular for $0\leq i\leq k$, then for every $N\geq0$, the sequences of $\mathcal{G}_{N}(f_0,f_1,\cdots,f_{k-1})$ are $k$-regular.
\end{remark}

We end this section by some examples. Example \ref{degree} implies that the condition that all polynomials are degree $1$ in Theorem \ref{polynomial1}, is necessary. Example \ref{a-regular} shows that if  we replace the $k$-automatic condition of the sequence $\mathbf{a}_i$ by a $k$-regular condition in Theorem \ref{polynomial1},  then the polynomial generated  sequence maybe not $k$-regular.

\begin{exam}\label{degree}
The sequences in $\mathcal{G}(x^2,x+1)$ are not $2$-regular. Assume $\{u(n)\}_{n\geq0}\in\mathcal{G}(x^2,x+1)$, and
$$u(2n)=u(n)^2, ~u(2n+1)=u(n)+1.$$
If $u(0)=0$, then $u(3)=2$. If $u(0)=1$, then $u(2)=2$.  In either case, $a\in\{2,3\}$, we have
$$\frac{\log_2(u(a\cdot 2^k))}{\log_2(a\cdot 2^k)}=\frac{2^k\log_2(u(a))}{k(\log_2a+1)}=\frac{2^k}{(1+\log_2a)k}\rightarrow \infty, ~(k\rightarrow \infty).$$
Hence, by Theorem \ref{non-regular}, the sequences in $\mathcal{G}(x^2,x+1)$ are not $2$-regular.
\end{exam}

\begin{exam}\label{a-regular}
The sequence $\mathcal{G}(\mathbf{v} x,x+1)$ is not $2$-regular, where $\mathbf{v}=\{n\}_{n\geq0}$ is a $2$-regular sequence. Assume $\{u(n)\}_{n\geq0}=\mathcal{G}(\mathbf{v} x,x+1)$, then
$$u(2n)=nu(n), ~u(2n+1)=u(n)+1.$$
It is easy to check that $u(2^kn)=2^{k-1+k-2+\cdots+1}n^ku(n)=2^{k(k-1)/2}n^ku(n).$
Since $u(3)=2$ and
$$\frac{\log_2(u(3\cdot 2^k))}{\log_2(3\cdot 2^k)}=\frac{k(k-1)/2+k\log_23+1}{k(\log_23+1)}\rightarrow \infty, ~(k\rightarrow \infty)$$
which implies the sequence $\mathcal{G}(\mathbf{v} x,x+1)$ is not $2$-regular, by Theorem \ref{non-regular}.

\end{exam}

\begin{exam}
Let $\mathbf{t}=\{t(n)\}_{n\geq0}=01101001\cdots$ be the Thue-Morse sequence.  Clearly that $t(2n)=t(n), t(2n+1)=1-t(n)$ for all $n\geq0$.  Let $\mathbf{u}=\{u(n)\}_{n\geq0}=\mathcal{G}(\mathbf{t}x+1,x+\mathbf{t})$.  Then, for $n\geq0$,
\begin{eqnarray*}
&&u(0)=u(1)=1,\\
&&u(4n)=u(4n+1)  =  u(2n)+u(2n+1)-u(n), \\
&&u(4n+2)= -u(2n)+u(n)+2,\\
&&u(4n+3)=  u(n)+1.
\end{eqnarray*}
Hence, by Theorem \ref{linear}, $\mathcal{G}(\mathbf{t}x+1,x+\mathbf{t})$ is a 2-regular sequence.
\end{exam}

\section{Application}\label{sec.4}
In this section, we study the Hankel determinants $d(m,n)$ and obtain the recurrence formulae of the determinants $d(m,n)$ for $m\geq0,n\geq1$.
By these formulae,  we prove the conjecture of Cigler.  Moreover, we find that the Hankel determinant sequence $\{d(0,n)\}_{n\geq0}$ is a  polynomial generated sequence with shift.  Using Corollary \ref{coro-poly2},  we give a proof of Theorem \ref{regular} at last.


Let $\mathbf{p}=\{p(n)\}_{n\geq0}$ be the characteristic sequence of powers 2. Recall that  the sequence $\{p(n)\}_{n\geq0}$ can be generated by the recurrence formula:
\begin{equation}\label{equation1}
p(0)=0, p(1)=1, p(2n)=p(n), p(2n+1)=0,~(n\geq1).
\end{equation}

Let $d(m,n)=|\mathbf{p}_n^m|=\det(p(i+j+m))_{i,j=0}^{n-1}$ for any $m\geq0,n\geq1$,  then we have the following lemma which plays important role in this paper.
\begin{lem}\label{formula1}
For any $m\geq1,n\geq1$, we have
\begin{enumerate}
  \item $d(0,2n)=d(0,n)d(1,n)-d(2,n-1)d(3,n-1)$,
  \item $d(0,2n+1)=d(0,n+1)d(1,n)-d(2,n)d(3,n-1)$,
  \item $d(1,2n)=(-1)^{n}d^{2}(1,n)$,
  \item $d(1,2n+1)=(-1)^nd^2(2,n),$
  \item $d(2m,2n)=d(m,n)d(m+1,n)$,
  \item $d(2m,2n+1)=d(m,n+1)d(m+1,n)$,
  \item $d(2m+1,2n)=(-1)^nd^2(m+1,n)$,
  \item $d(2m+1,2n+1)=0$.
\end{enumerate}
Here,  we define $d(2,0)=d(3,0)=1.$
\end{lem}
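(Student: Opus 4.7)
The strategy is to use the base-$2$ recurrences $p(2k)=p(k)$ and $p(2k+1)=0$ for $k\ge1$, together with $p(0)=0$ and $p(1)=1$, to reduce each Hankel determinant of order $N=2n$ or $2n+1$ to determinants of Hankel matrices of order roughly $n$. Concretely, I would apply the perfect shuffle $\sigma$ (listing even indices first, then odd indices) simultaneously to rows and columns of $(p(i+j+m))_{i,j=0}^{N-1}$. Since $\sigma$ is applied to both rows and columns, $\det(PMP^{\top})=\det(P)^{2}\det(M)=\det(M)$. In the reshuffled matrix the even--even and odd--odd blocks restore smaller Hankel matrices via $p(2k)=p(k)$, while the mixed-parity blocks are zero except at positions where the argument happens to equal the exceptional value~$1$.

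For items (5)--(8) the offset $m\ge1$ forces every entry's argument to be at least $2$, so the exceptional value $p(1)=1$ never appears and the reshuffled matrix is one of four clean block types. Items (5), (6) give block-diagonal matrices whose blocks are $(p(i+j+m))$ and $(p(i+j+m+1))$ of appropriate sizes, yielding the stated products. Item (7) gives a block anti-diagonal matrix with two identical blocks equal to $(p(i+j+m+1))_{i,j=0}^{n-1}$; the block-swap identity $\det\bigl(\begin{smallmatrix}0 & X\\ X & 0\end{smallmatrix}\bigr)=(-1)^{n}\det(X)^{2}$ produces $(-1)^{n}d(m+1,n)^{2}$. Item (8) yields a matrix with an $(n+1)\times(n+1)$ zero block in the top-left corner, so the first $n+1$ rows live in a subspace of dimension at most $n$ and the determinant vanishes.

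For items (1)--(4) the offset $m\in\{0,1\}$ means $p(1)=1$ appears at one or two corners of what would otherwise be zero off-diagonal blocks; the reshuffled matrix is therefore the clean block matrix $M_{0}$ from the previous paragraph plus a sparse perturbation $M_{1}$ supported on at most two entries, both equal to $1$. I would compute $\det(M_{0}+M_{1})$ by multilinearity in the (at most two) columns where $M_{1}$ is supported, obtaining $\det(M_{0})$ plus correction terms indexed by the nonempty subsets of those columns. For items (3), (4) there is one perturbed column: for (3), $\det(M_{0})=(-1)^{n}d(1,n)^{2}$ already gives the answer and the one-column correction vanishes by a dimension argument; for (4), $\det(M_{0})=0$ by the reasoning of (8) and the one-column correction reduces, after one Laplace expansion, to a block anti-diagonal with inner blocks $(p(i+j+2))_{i,j=0}^{n-1}$, yielding $(-1)^{n}d(2,n)^{2}$. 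For items (1), (2) there are two perturbed columns; the two single-column corrections vanish for dimension reasons, while the two-column correction reduces, after two successive Laplace expansions, to a block-diagonal matrix with blocks $(p(i+j+2))$ and $(p(i+j+3))$, giving $-d(2,n-1)d(3,n-1)$ and $-d(2,n)d(3,n-1)$ respectively (the boundary convention $d(2,0)=d(3,0)=1$ handles $n=1$ since the inner blocks are then $0\times0$).

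The main technical obstacle is the sign bookkeeping in items (1), (2), where two Laplace expansions must compose to $-1$ to give the correct sign in front of $d(2,\cdot)d(3,\cdot)$. The remaining verifications are routine: each block of the form $(p(2i+2j+c))$ reduces to $(p(i+j+c))$ via $p(2k)=p(k)$ (valid for $k\ge1$ and trivially for $k=0$ since $p(0)=0$), so the reduced blocks are exactly the Hankel matrices on the right-hand sides of the claimed recurrences.
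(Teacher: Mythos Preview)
Your approach is correct and is essentially the same as the paper's: both use the simultaneous even/odd row--column permutation (your ``perfect shuffle'', the paper's matrix $U$) to reduce each $\mathbf{p}_N^m$ to a $2\times 2$ block matrix built from smaller Hankel matrices, zero blocks, and the rank-one corner block $A$ coming from $p(1)=1$. The only difference is the level of detail: the paper writes down the block form and then asserts the determinant (e.g.\ $\bigl|\begin{smallmatrix}\mathbf{p}_n^0 & A\\ A & \mathbf{p}_n^1\end{smallmatrix}\bigr|=|\mathbf{p}_n^0||\mathbf{p}_n^1|-|\mathbf{p}_{n-1}^2||\mathbf{p}_{n-1}^3|$) without justification, whereas you spell out the multilinearity/Laplace argument that actually produces those values and the correct signs.
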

\begin{proof}
For each $n$-order square matrix $M=(m_{i,j})_{1\leq i,j\leq n}$, there exists a matrix $U$ with $|U|=\pm1$ such that
\begin{equation}\label{equation2}
    UMU^{t}=\left(
              \begin{array}{cc}
                (m_{2i-1,2j-1})_{\begin {subarray}{c} 1\leq i\leq\mu \\1\leq j\leq\mu\end {subarray}} & (m_{2i-1,2j})_{\begin {subarray}{c} 1\leq i\leq\mu \\1\leq j\leq\nu\end {subarray}} \\
                (m_{2i,2j-1})_{\begin {subarray}{c} 1\leq i\leq\nu \\1\leq j\leq\mu\end {subarray}} & (m_{2i,2j})_{\begin {subarray}{c} 1\leq i\leq\nu \\1\leq j\leq\nu\end {subarray}} \\
              \end{array}
            \right),
\end{equation}
where $\mu=\lfloor\frac{1}{2}(n+1)\rfloor$ and $\nu=\lfloor\frac{1}{2}n\rfloor$ and $U^{t}$ denote the transposed matrix of $U$.

$(1)$ By Formula (\ref{equation1}) and (\ref{equation2}), we have
\begin{equation*}
    U\mathbf{p}_{2n}^{0}U^{t}=\left(
                                    \begin{array}{cc}
                                      \mathbf{p}_{n}^{0} & A_{n,n} \\
                                      A_{n,n} & \mathbf{p}_{n}^{1} \\
                                    \end{array}\mathbb{}
                                  \right)
\end{equation*}
where $A_{m,n}=(a_{ij})_{1\leq i\leq m,1\leq j\leq n}$ denote  the $m\times n$ matrix with all entries are zero except $a_{11}=1$.
Hence,
\begin{equation*}
   |\mathbf{p}_{2n}^{0}| =   \left| \begin{array}{cc}
                                      \mathbf{p}_{n}^{0} & A_{n,n} \\
                                      A_{n,n} & \mathbf{p}_{n}^{1} \\
                                    \end{array}
                                  \right| \\
   =  |\mathbf{p}_{n}^{0}||\mathbf{p}_{n}^{1}|-|\mathbf{p}_{n-1}^{2}||\mathbf{p}_{n-1}^{3}|.
   \end{equation*}

$(2)$ By Formula (\ref{equation1}) and (\ref{equation2}), we have
\begin{equation*}
   |\mathbf{p}_{2n+1}^{0}| =   \left|  \begin{array}{cc}
                                      \mathbf{p}_{n+1}^{0} & A_{n+1,n} \\
                                      A_{n,n+1} & \mathbf{p}_{n}^{1} \\
                                    \end{array}
                                  \right| \\
   =  |\mathbf{p}_{n+1}^{0}||\mathbf{p}_{n}^{1}|-|\mathbf{p}_{n}^{2}||\mathbf{p}_{n-1}^{3}|.
\end{equation*}

$(3)$ By Formula (\ref{equation1}) and (\ref{equation2}), we have
\begin{equation*}
   |\mathbf{p}_{2n}^{1}| =   \left|  \begin{array}{cc}
                                       A_{n,n} & \mathbf{p}_{n}^{1}  \\
                                       \mathbf{p}_{n}^{1}& \mathbf{0}_{n,n} \\
                                    \end{array}
                                  \right| \\
   =  (-1)^{n}|\mathbf{p}_{n}^{1}|^{2}.
\end{equation*}
where $\mathbf{0}_{m,n}$ denote the $m\times n$ matrix with all entries are zero.

$(4)$ By Formula (\ref{equation1}) and (\ref{equation2}), we have
\begin{equation*}
   |\mathbf{p}_{2n+1}^{1}| =   \left|                                      \begin{array}{cc}
                                       A_{n+1,n+1} & \mathbf{p}_{n+1,n}^{1}  \\
                                       \mathbf{p}_{n,n+1}^{1}& \mathbf{0}_{n,n} \\
                                    \end{array}
                                  \right| \\
   =  (-1)^{n}|\mathbf{p}_{n}^{2}|^{2}.
\end{equation*}

$(5)$ By Formula (\ref{equation1}) and (\ref{equation2}), we have
\begin{equation*}
   |\mathbf{p}_{2n}^{2m}| =   \left|  \begin{array}{cc}
                                        \mathbf{p}_{n}^{m} & \mathbf{0}_{n,n}   \\
                                        \mathbf{0}_{n,n} & \mathbf{p}_{n}^{m+1} \\
                                    \end{array}
                                  \right| \\
   =|\mathbf{p}_{n}^{m}||\mathbf{p}_{n}^{m+1}|.
\end{equation*}

$(6)$ By Formula (\ref{equation1}) and (\ref{equation2}), we have
\begin{equation*}
   |\mathbf{p}_{2n}^{2m+1}| =   \left|  \begin{array}{cc}
                                        \mathbf{0}_{n,n} & \mathbf{p}_{n}^{m+1}    \\
                                        \mathbf{p}_{n}^{m+1} & \mathbf{0}_{n,n}\\
                                    \end{array}
                                  \right| \\
   =(-1)^{n}|\mathbf{p}_{n}^{m+1}|^{2}.
\end{equation*}

$(7)$ By Formula (\ref{equation1}) and (\ref{equation2}), we have
\begin{equation*}
   |\mathbf{p}_{2n+1}^{2m}| =   \left|  \begin{array}{cc}
                                        \mathbf{p}_{n+1}^{m} & \mathbf{0}_{n+1,n}   \\
                                        \mathbf{0}_{n,n+1} & \mathbf{p}_{n}^{m+1} \\
                                    \end{array}
                                  \right| \\
   =|\mathbf{p}_{n+1}^{m}||\mathbf{p}_{n}^{m+1}|.
\end{equation*}

$(8)$ By Formula (\ref{equation1}) and (\ref{equation2}), we have
\begin{equation*}
   |\mathbf{p}_{2n+1}^{2m+1}| =   \left|  \begin{array}{cc}
                                        \mathbf{0}_{n+1,n+1} & \mathbf{p}_{n+1,n}^{m+1}    \\
                                        \mathbf{p}_{n,n+1}^{m+1} & \mathbf{0}_{n,n}\\
                                    \end{array}
                                  \right| \\
   =0.
\end{equation*}
\end{proof}

\begin{remark}
Define $d(m,0)=1,d(m,-1)=0$ for all $m\geq0$,  then Formulae of Lemma \ref{formula1} hold for $n\geq0$.
\end{remark}

\begin{prop}\label{formula2}
For any $n\geq1$, $d(1,n),d(2,n)\in\{-1,1\},d(m,n)\in\{-1,0,1\}~(m\geq3)$. Moreover, $d(1,n)=(-1)^{\lfloor \frac{n}{2}\rfloor}$.
\end{prop}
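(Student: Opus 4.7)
The plan is to prove all three assertions simultaneously by strong induction on $n$, using the eight recurrences of Lemma~\ref{formula1}. The base case $n=1$ is immediate: $d(m,1)$ is just the single entry $p(m)\in\{0,1\}$, so in particular $d(1,1)=p(1)=1=(-1)^{\lfloor 1/2\rfloor}$ and $d(2,1)=p(2)=1$.

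For the inductive step, assume (A) $d(1,n'),d(2,n')\in\{-1,1\}$, (B) $d(m,n')\in\{-1,0,1\}$ for $m\geq 3$, and (C) $d(1,n')=(-1)^{\lfloor n'/2\rfloor}$ all hold for $1\leq n'<n$. I would first handle $d(1,n)$: writing $n=2k$ or $n=2k+1$, formulas (3) and (4) of Lemma~\ref{formula1} give
\[
d(1,2k)=(-1)^{k}d(1,k)^{2},\qquad d(1,2k+1)=(-1)^{k}d(2,k)^{2}.
\]
By (A) applied at index $k<n$ (using the convention $d(2,0)=1$ when $k=0$), both squares equal $1$, so $d(1,n)=(-1)^{k}=(-1)^{\lfloor n/2\rfloor}$, establishing (A) and (C) for the index $n$.

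Next I would treat $d(2,n)$ using formulas (5) and (6) at $m=1$: $d(2,2k)=d(1,k)d(2,k)$ and $d(2,2k+1)=d(1,k+1)d(2,k)$. The only subtle point is that for $n=2k+1$ one needs $k+1<n$ to invoke the inductive hypothesis on $d(1,k+1)$; this fails only for $n=1$, which is already the base case. Everywhere else both factors lie in $\{-1,1\}$ by (A), so $d(2,n)\in\{-1,1\}$.

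Finally, for $m\geq 3$ I would split on the parity of $m$. If $m=2m'+1$ is odd, formulas (7) and (8) yield $d(m,2k+1)=0$ and $d(m,2k)=(-1)^{k}d(m'+1,k)^{2}$, whose right-hand side lies in $\{-1,0,1\}$ since $d(m'+1,k)\in\{-1,0,1\}$ by (A) (if $m'+1=2$) or by (B) (if $m'+1\geq 3$). If $m=2m'\geq 4$ is even, formulas (5) and (6) express $d(m,n)$ as a product of two factors, each of which lies in $\{-1,0,1\}$ by the inductive hypothesis (with the same easy check $k+1<n$ in the odd-$n$ subcase). The product is therefore in $\{-1,0,1\}$, completing (B) for the index $n$. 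There is no genuine obstacle here; the only thing to watch is to consistently apply the conventions $d(m,0)=1$, $d(m,-1)=0$ from the remark following Lemma~\ref{formula1} and to confirm the inductive argument reaches down to strictly smaller indices, which it does except at the already-handled base case $n=1$.
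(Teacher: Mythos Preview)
Your proof is correct and follows essentially the same approach as the paper's: both arguments reduce everything to the recurrences of Lemma~\ref{formula1} and induct. The only organizational difference is that the paper splits into two separate inductions (first on $n$ to get $d(1,n),d(2,n)\in\{-1,1\}$, then on $m$ to get $d(m,n)\in\{-1,0,1\}$ for $m\geq 3$), whereas you run a single strong induction on $n$ covering all $m$ simultaneously; the substance is the same.
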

\begin{proof}
We first prove that $d(1,n),d(2,n)\in\{-1,1\}$ by induction on $n$.  It is easy to check that $d(1,1)=-d(1,2)=-d(1,3)=1,d(2,1)=d(2,2)=-d(2,3)=1.$ Now, assume that $d(1,n),d(2,n)\in\{-1,1\}$ for all $n<2^{k}$ with $k\geq1$. Then, for any $2^{k}\leq n<2^{k+1}$, there exists an integer $m<2^{k}$  such that $n=2m$ or $n=2m+1$. Moreover,
\begin{itemize}
  \item $d(1,n)=d(1,2m)=(-1)^md^2(1,m)\in\{-1,1\},$
  \item $d(1,n)=d(1,2m+1)=(-1)^md^2(2,m)\in\{-1,1\},$
  \item $d(2,n)=d(1,2m)=d(1,m)d(2,m)\in\{-1,1\},$
  \item $d(2,n)=d(1,2m+1)=d(1,m+1)d(2,m)\in\{-1,1\}.$
\end{itemize}
Hence, $d(1,n),d(2,n)\in\{-1,1\}$ for all $n\geq1$.

\medskip

Now, assume there exists an integer $k$ such that $d(m,n)\in\{-1,0,1\}$ for all $m\leq2^k,n\geq1$. We need to prove the conclusion hold for $m\leq 2^{k+1}$. Since $m=2s$ or $m=2s+1$ for some $s\leq2^k$, by Lemma \ref{formula1} and the hypothesis, we have
\begin{itemize}
  \item $d(m,2n)=d(2s,2n)=d(s,n)d(s+1,n)\in\{-1,0,1\}$,
  \item $d(m,2n+1)=d(2s,2n+1)=d(s,n+1)d(s+1,n)\in\{-1,0,1\}$,
  \item $d(m,2n)=d(2s+1,2n)=(-1)^nd^2(s+1,n)\in\{-1,0,1\}$,
  \item $d(m,2n+1)=d(2s+1,2n+1)=0\in\{-1,0,1\}$.
\end{itemize}
Thus, $d(m,n)\in\{-1,0,1\}$ for all $m,n\geq1.$

By Lemma \ref{formula1}, it follows that
$d(1,n)=(-1)^{\lfloor \frac{n}{2}\rfloor },$
which completes this proof.
\end{proof}

\medskip

The following two propositions have been proved by Cigler in \cite{Cigler}.  Here, we give another proofs of them. Our method which is different from Cigler mainly depends on the recurrence formulae.
The first proposition gives a description of the sequence $\{d(2,n)\}_{n\geq0}$. The second proposition gives a description of the sequence $\{d(m,n)\}_{n\geq0}$ for all $m\geq3$.

\begin{prop}\label{formula3}
If $2^k\leq n<2^{k+1}$ for some integer $k\geq2$, then
$$d(2,n)=\begin{cases}
      -d(2,n-2^k)& \text{ if $2^k\leq n<2^k+2^{k-1}$}, \\
      d(2,n-2^k)& \text{if $2^k+2^{k-1}\leq n<2^{k+1}$}.
\end{cases}$$
\end{prop}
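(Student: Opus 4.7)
The plan is to argue by induction on $k$, using the halving recurrences of Lemma \ref{formula1}(5)--(6) specialized at $m=1$ together with the explicit sign formula $d(1,j) = (-1)^{\lfloor j/2 \rfloor}$ from Proposition \ref{formula2}. Proposition \ref{formula2} also guarantees $d(2,n)\in\{-1,1\}$ for $n\ge 1$, so there is no issue of vanishing when comparing ratios.

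For the base case $k=2$, I would just tabulate directly: using $d(2,2n)=(-1)^{\lfloor n/2\rfloor}d(2,n)$ and $d(2,2n+1)=(-1)^{\lfloor (n+1)/2\rfloor}d(2,n)$ together with the seed values $d(2,0)=d(2,1)=d(2,2)=1$ and $d(2,3)=-1$, one finds $d(2,4)=-1$, $d(2,5)=-1$, $d(2,6)=1$, $d(2,7)=-1$. Comparison with $d(2,0),\dots,d(2,3)$ confirms the required sign pattern $-,-,+,+$ on $n\in\{4,5,6,7\}$.

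For the inductive step, suppose the claim holds at level $k\ge 2$ and fix $n$ with $2^{k+1}\le n<2^{k+2}$. Write $n=2m+\epsilon$ with $\epsilon\in\{0,1\}$; the constraint on $n$ forces $2^k\le m<2^{k+1}$. Setting $r=m-2^k$ gives $0\le r<2^k$ and $n-2^{k+1}=2r+\epsilon$. The recurrences yield
\begin{equation*}
d(2,n) = (-1)^{\lfloor (m+\epsilon)/2\rfloor}\,d(2,m), \qquad d(2,n-2^{k+1}) = (-1)^{\lfloor (r+\epsilon)/2\rfloor}\,d(2,r).
\end{equation*}
Since $\lfloor (2^k+r+\epsilon)/2\rfloor = 2^{k-1}+\lfloor (r+\epsilon)/2\rfloor$ and $2^{k-1}$ is even (this is precisely where $k\ge 2$ enters), the two signs coincide, and therefore $d(2,n)/d(2,n-2^{k+1}) = d(2,m)/d(2,r)$.

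Now I would invoke the inductive hypothesis on $m$: if $r<2^{k-1}$ then $d(2,m)=-d(2,r)$, while if $r\ge 2^{k-1}$ then $d(2,m)=d(2,r)$. Translating back, $r<2^{k-1}$ corresponds to $n\in[2^{k+1},\,2^{k+1}+2^k)$ and $r\ge 2^{k-1}$ corresponds to $n\in[2^{k+1}+2^k,\,2^{k+2})$, which is exactly the first-half/second-half dichotomy at level $k+1$. This completes the induction. The only real obstacle is the bookkeeping --- keeping the parities aligned and tracking which subinterval of $[2^{k+1},2^{k+2})$ the doubled index $n$ lands in --- and the numerical identity $(-1)^{2^{k-1}}=1$ for $k\ge 2$ is what makes all the signs cancel cleanly, which is also what forces the hypothesis $k\ge 2$ in the statement (indeed, direct check shows $d(2,2)=d(2,0)$ rather than $-d(2,0)$, so the proposition genuinely fails at $k=1$).
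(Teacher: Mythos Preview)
Your proof is correct and follows essentially the same route as the paper's: both argue by induction on $k$, invoke the recurrences $d(2,2j)=d(1,j)d(2,j)$ and $d(2,2j+1)=d(1,j+1)d(2,j)$ from Lemma~\ref{formula1}, and use $d(1,j)=(-1)^{\lfloor j/2\rfloor}$ together with the evenness of $2^{k-1}$ (for $k\ge 2$) to pass the sign comparison down to the halved index, where the inductive hypothesis applies. Your $\epsilon$-parametrization merely packages the even/odd cases uniformly, whereas the paper writes out one representative case and leaves the rest to the reader.
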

\begin{proof}
For $k=2$, the assertions above can be checked directly.  Assume the proposition is true for $k\leq N$. Now, we discuss the case $k=N+1$.

If $2^{N+1}\leq n<2^{N+1}+2^{N}$ and $n=2^{N+1}+2m$, then $0\leq m<2^{N-1}.$ By Lemma \ref{formula1} and the hypothesis, we have $d(2,2^{N}+m)=-d(2,m)$ and
       \begin{eqnarray*}
d(2,n) & = & d(2,2^{N+1}+2m) =d(1,2^{N}+m)d(2,2^{N}+m)\\
 & = & -d(1,2^{N}+m)d(2,m)=-d(1,m)d(2,m)\\
 & = & -d(2,2m)=-d(2,n-2^{N+1}).
\end{eqnarray*}
The other ones can be obtained by the same method.
\end{proof}
\begin{remark}
In fact, Proposition \ref{formula3} gives a generation method of  the sequence $\{d(2,n)\}_{n\geq0}$. Let $A_0=11,B_0=1-1,A_n=A_{n-1}B_{n-1},B_n=\overline{A_{n-1}}B_{n-1}~(n\geq1), $ where the overbar is shorthand for the morphism that maps 1 to -1 and -1 to 1. Then, $$\{d(2,n)\}_{n\geq0}=\lim_{n\rightarrow\infty}A_n=A_0B_0\overline{A_0}B_0\overline{A_0}~\overline{B_0}~\overline{A_0}B_0\cdots=111-1-1-11-1-1-1-11\cdots.$$
\end{remark}

\medskip

\begin{prop}\label{formula4}
If $2^k<m\leq2^{k+1}$ for some integer $k\geq1$, then
$$d(m,n)=\begin{cases}
    \pm1,  & \text{if $n\equiv0$ or $1-m ~(\bmod 2^{k+1})$ }, \\
    0,  & \text{otherwise}.
\end{cases}$$
\end{prop}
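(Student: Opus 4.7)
The plan is to prove Proposition \ref{formula4} by strong induction on $m \ge 3$, using the recurrences in Lemma \ref{formula1} together with Proposition \ref{formula2}. For the base cases $m=3$ and $m=4$: parts (7)--(8) of Lemma \ref{formula1} give $d(3,2n) = (-1)^n d(2,n)^2$, which is $\pm1$ by Proposition \ref{formula2}, and $d(3,2n+1)=0$, so $d(3,N) \ne 0$ iff $N \equiv 0,2 \pmod 4$; parts (5)--(6) give $d(4,2n) = d(2,n)d(3,n)$ and $d(4,2n+1) = d(2,n+1)d(3,n)$, nonzero exactly when the $d(3,\cdot)$ factor is, i.e.\ when $n$ is even, so $d(4,N) \ne 0$ iff $N \equiv 0,1 \pmod 4$. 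Both match the claimed form at $k=1$.

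For the inductive step I fix $m \ge 5$ with $2^k < m \le 2^{k+1}$ (so $k \ge 2$) and assume the proposition for every $3 \le m' < m$. I split on the parity of $m$. If $m = 2s+1$ is odd then $2^{k-1} \le s \le 2^k-1$, so $s+1 \in (2^{k-1},2^k]$ sits at level $k-1$ and $s+1<m$; Lemma \ref{formula1}(7)--(8) gives $d(m,2n+1)=0$ and $d(m,2n) = (-1)^n d(s+1,n)^2$, and the induction hypothesis yields $d(s+1,n) \ne 0$ iff $n \equiv 0,-s \pmod{2^k}$. Translating, $\{N : d(m,N)\ne 0\} = \{N \equiv 0,-2s \pmod{2^{k+1}}\} = \{N \equiv 0,1-m \pmod{2^{k+1}}\}$, and since both $0$ and $1-m$ are even mod $2^{k+1}$, the odd-index vanishing is automatic. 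If $m = 2s$ is even then $2^{k-1} < s \le 2^k$ and $s \ge 3$, and Lemma \ref{formula1}(5)--(6) gives
\[
d(m,2n) = d(s,n)\,d(s+1,n), \qquad d(m,2n+1) = d(s,n+1)\,d(s+1,n).
\]
Induction on $s<m$ at level $k-1$ gives $d(s,n) \ne 0$ iff $n \equiv 0,1-s \pmod{2^k}$; for $s+1$ one invokes induction at level $k-1$ when $s+1\le 2^k$, and at level $k$ (on $m'=2^k+1<m$) in the boundary sub-case $s=2^k$. In either sub-case the conclusion is $d(s+1,n) \ne 0$ iff $n \equiv 0,-s \pmod{2^k}$ (the boundary sub-case reads ``$n \equiv 0,2^k \pmod{2^{k+1}}$'' which collapses to $n \equiv 0 \pmod{2^k}$, agreeing with $-s=-2^k\equiv 0$). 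Intersecting these residue sets modulo $2^k$ and using $3 \le s$ to exclude the coincidences $s \equiv 0,1 \pmod{2^k}$ (which are precisely the only ways the relevant residues could collide), the intersections pin down $d(m,2n) \ne 0$ iff $n \equiv 0 \pmod{2^k}$ and $d(m,2n+1) \ne 0$ iff $n \equiv -s \pmod{2^k}$, i.e.\ $N \equiv 0,1-m \pmod{2^{k+1}}$ as required.

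The main subtlety to handle carefully is the boundary case $m = 2^{k+1}$ (the powers of $2$), where $s+1 = 2^k+1$ sits at the \emph{same} level $k$ as $m$ rather than at level $k-1$; this is why the induction must be strong induction on $m$ and not induction on $k$, since $2^k+1 < 2^{k+1} = m$ for $k \ge 1$. The remaining work is essentially routine residue-class bookkeeping, but the split into sub-cases and the explicit exclusion of the coincidences $s \equiv 0,1 \pmod{2^k}$ are what make the intersection step clean.
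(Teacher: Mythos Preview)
Your proof is correct and follows essentially the same strategy as the paper's: establish the base cases $m=3,4$ directly from Lemma~\ref{formula1} and Proposition~\ref{formula2}, then induct using the parity split $m=2s$ versus $m=2s+1$ and the recurrences (5)--(8) of Lemma~\ref{formula1}, with separate handling of the boundary $m=2^{k+1}$. The one organizational difference is that the paper inducts on $k$ while you do strong induction on $m$; your choice makes the boundary case logically cleaner, since when $m=2^{k+1}$ you can simply invoke the already-established case $m'=2^k+1<m$, whereas the paper's induction on $k$ must implicitly appeal to the odd case at the \emph{same} level $N+1$ before closing the even boundary case---a dependency the paper does not flag explicitly.
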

\begin{proof}
By (7), (8) of Lemma \ref{formula1} and Proposition \ref{formula2}, we have
$$d(3,2n)=(-1)^{n}d^{2}(2,n)=(-1)^{n}, d(3,2n+1)=0.$$
Then, by (5), (6) of Lemma \ref{formula1}, we have
\begin{itemize}
  \item $d(4,4n)=d(2,2n)d(3,2n)=(-1)^{n}d(2,2n)$,
  \item $d(4,4n+1)=d(2,2n+1)d(3,2n)=(-1)^{n}d(2,2n+1)$,
  \item $d(4,4n+2)=d(2,2n+1)d(3,2n+1)=0,$
  \item $d(4,4n+3)=d(2,2n+2)d(3,2n+1)=0.$
\end{itemize}
Hence,
$ d(3,n)\neq0  \Leftrightarrow   n\equiv 0~\text{or}~ 2 ~(\bmod 4),
d(4,n)\neq0  \Leftrightarrow   n\equiv 0~\text{or} ~1 ~(\bmod 4),
$
which implies that the conclusions hold for $k=1$.

\medskip

Now, assume the assertions hold for $k\leq N$, we need to prove the case $k=N+1$.  There are following cases to discuss.
\begin{itemize}
      \item If $2^{N+1}<m<2^{N+2}$ and $m=2r$ for some integer $r$, then $2^N<r,r+1\leq2^{N+1}$. By Lemma \ref{formula1} and the hypothesis, we have
             \begin{itemize}
                  \item $d(2r,2s)\neq0 \Leftrightarrow d(r,s)d(r+1,s)\neq0 \Leftrightarrow s\equiv 0 ~(\bmod 2^{N+1})\Leftrightarrow 2s\equiv 0 ~(\bmod 2^{N+2}).$
                  \item $d(2r,2s+1)\neq0 \Leftrightarrow d(r,s+1)d(r+1,s)\neq0 \Leftrightarrow s\equiv -r~ (\bmod 2^{N+1})\Leftrightarrow 2s+1\equiv 1-2r ~(\bmod 2^{N+2}).$
             \end{itemize}

     \item If $m=2^{N+2}$ and $m=2r$, then $r=2^{N+1},2^{N+1}<r+1\leq 2^{N+2}$. We have
$d(r+1,s)\neq0 \Leftrightarrow s\equiv 0 ~\text{or}~2^{N+1}~(\bmod 2^{N+2})\Leftrightarrow s\equiv 0 ~(\bmod  2^{N+1}).$
Hence, by Lemma \ref{formula1} and the hypothesis, we have
            \begin{itemize}
                \item $d(2r,2s)\neq0 \Leftrightarrow d(r,s)d(r+1,s)\neq0 \Leftrightarrow s\equiv 0 ~(\bmod  2^{N+1})\Leftrightarrow 2s\equiv 0~ (\bmod 2^{N+2}).$
                \item $d(2r,2s+1)\neq0 \Leftrightarrow d(r,s+1)d(r+1,s)\neq0 \Leftrightarrow s\equiv 0 ~(\bmod 2^{N+1})\Leftrightarrow 2s+1\equiv 1\equiv 1-2r (\bmod  2^{N+2}).$
           \end{itemize}

      \item If $2^{N+1}<m\leq2^{N+2}$ and $m=2r+1$ for some integer $r$, then $2^N<r+1\leq2^{N+1}$. By Lemma \ref{formula1} and the hypothesis, we have
   $$d(2r+1,2s)\neq0 \Leftrightarrow d(r+1,s)\neq0 \Leftrightarrow s\equiv 0~\text{or} ~-r~(\bmod  2^{N+1})\Leftrightarrow 2s\equiv 0~\text{or}~-2r ~(\bmod 2^{N+2}).$$

\end{itemize}

Thus,  if $2^{N+1}<m\leq2^{N+2}$, then
$$d(m,n)\neq0  \Leftrightarrow n\equiv 0~\text{or}~1-m~(\bmod 2^{N+2}),$$
which completes the proof.
\end{proof}

\medskip
Now, we give a proof of Theorem \ref{conjecture} which is an answer of Cigler's conjecture.
\begin{proof}[Proof of Theorem \ref{conjecture} ]
It is easy to check that that the two assertions hold for $k=1$. Assume the two assertions hold for $k\leq N~(N\geq1)$, it suffices to show that the assertions also hold for $k=N+1$. There are three cases to discuss.

\begin{itemize}
  \item If $2^{N+1}<m\leq2^{N+2}$ and $m=2r+1$, then, by (7) of Lemma \ref{formula1}, we have
     \begin{itemize}
       \item $d(m,2^{N+2}n)=d(2r+1,2^{N+2}n)=(-1)^{2^{N+1}n}=1,$
       \item $d(m,2^{N+2}n-m+1)=d(2r+1,2^{N+2}n-2r)=(-1)^{2^{N+1}n-r}=(-1)^{r}.$
     \end{itemize}

\medskip

  \item If $2^{N+1}<m\leq2^{N+2}$ and $m=4r+2$, then $2^{N}<2r+1, 2r+2\leq 2^{N+1}$.
       \begin{itemize}
       \item    Note that $d(2,2^{N+2}n)=d(2,2^{N+1}n)$. By (5) of Lemma \ref{formula1} and the hypothesis, we have     \vspace{-0.4em}
                \begin{eqnarray*}
               d(m,2^{N+2}n) & = & d(4r+2,2^{N+2}n)=d(2r+1,2^{N+1}n)d(2r+2,2^{N+2}n) \\
                & = & d(2,2^{N+1}n)=d(2,2^{N+2}n).\\   \end{eqnarray*}
                \vspace{-3em}
       \item  Note that $d(2,2^{N+2}n-4r-1)=(-1)^rd(2,2^{N+1}n-2r-1)$ and $\epsilon_{2r+1}=\epsilon_{r+1}.$  By (6) of Lemma \ref{formula1} and the hypothesis, we have  \vspace{-0.4em}
               \begin{eqnarray*}
               d(m,2^{N+2}n-m+1) & = & d(4r+2,2^{N+2}n-4r-1)\\
                & = & d(2r+1,2^{N+1}n-2r)d(2r+2,2^{N+1}n-2r-1) \\
                & = & (-1)^{r}(-1)^{n+\epsilon_{r+1}}d(2,2^{N+1}n-2r-1)\\
                & = & (-1)^{n+\epsilon_{2r+1}}d(2,2^{N+2}n-4r-1).
              \end{eqnarray*}
     \end{itemize}

\smallskip

  \item If $2^{N+1}<m\leq2^{N+2}$ and $m=4r$, then $2^{N}<2r\leq2^{N+1}, 2^{N}+1<2r+1\leq2^{N+1}+1$.
      \begin{itemize}
       \item  Note that $d(2,2^{N+2}n)=d(2,2^{N+1}n)$. By (5) of Lemma \ref{formula1}, the first case and the hypothesis, we have     \vspace{-0.5em}
              \begin{eqnarray*}
                  d(m,2^{N+2}n) & = & d(4r,2^{N+2}n)=d(2r,2^{N+1}n)d(2r+1,2^{N+2}n) \\
                                         & = &  d(2r,2^{N+1}n)=d(2,2^{N+1}n)=d(2,2^{N+2}n).
             \end{eqnarray*}
       \item  Note that $d(2,2^{N+2}n-4r+1)=(-1)^rd(2,2^{N+1}n-2r)=d(2,2^{N+1}n-2r+1)$ and $\epsilon_{2r}=(\epsilon_{r}+r)~\bmod 2.$ By (6) of Lemma \ref{formula1} and the hypothesis, we have  \vspace{-0.5em}
              \begin{eqnarray*}
                   d(m,2^{N+2}n-m+1) & = & d(4r,2^{N+2}n-4r+1) \\
                                                   & = & d(2r,2^{N+1}n-2r+1)d(2r+1,2^{N+2}n-2r)\\
                                                   & = & (-1)^{r} (-1)^{n+\epsilon_{r}}d(2,2^{N+1}n-2r+1)\\
                                                   & = & (-1)^{n+\epsilon_{2r}}d(2,2^{N+2}n-4r+1).
              \end{eqnarray*}
     \end{itemize}
\end{itemize}
Hence, our theorem follows.
\end{proof}



By Lemma \ref{formula1}, Proposition \ref{formula2}-\ref{formula4} and Theorem \ref{conjecture}, we prove Therorem \ref{regular}.
\begin{proof}[Proof of Therorem \ref{regular}]
Let $\mathbf{a}=\{d(1,n)\}_{n\geq0}, \mathbf{b}=\{d(2,n-1)d(3,n-1)\}_{n\geq0}$ and $\mathbf{c}=\{d(2,n)d(3,n-1)\}_{n\geq0}$.
By Proposition \ref{formula2}-\ref{formula3}, we know that $\{d(1,n)\}_{n\geq0}$ is periodic with period $2$ and $\{d(2,n)\}_{n\geq0}$ is $2$-automatic.
Hence, by Theorem \ref{cloure}, we know that $\mathbf{a}, \mathbf{b}$ and $\mathbf{c}$ are $2$-automatic sequences.
By Lemma \ref{formula1}, the sequence $\{d(0,n)\}_{n\geq0}$ is a polynomial generated sequence with shift, as
$$d(0,2n)=a(n)d(0,n)-b(n),~d(0,2n+1)=a(n)d(0,n+1)-c(n).$$
By Corollary \ref{coro-poly2}, the sequene $\{d(0,n)\}_{n\geq0}$ is $2$-regular.

If $m\geq3$ is odd, then by Proposition \ref{formula4} and Theorem \ref{conjecture}, we know that the sequence $\{d(m,n)\}_{n\geq0}$  is periodic. Moreover,  $2^{k+1}$ is a period if $2^k<m\leq 2^{k+1}$.
If $m\geq3$ is even, then by Proposition \ref{formula4} and Theorem \ref{conjecture}, we know that the sequence $\{d(m,n)\}_{n\geq0}$  is $2$-automtic.
We completes this proof.
\end{proof}

\begin{remark}
Note from Proposition \ref{formula2} that $d(m,n)\in\{-1,0,1\}$,  we know that $$d^{k}(m,n)\in\{d(m,n),d^2(m,n)\}$$ for any integer $k\geq0$. By Lemma \ref{formula1}, it is easy to check directly that the $2$-kernel of the sequence $\{d(m,n)\}_{m\geq1,n\geq0}$  is finite and the $2$-kernel of the sequence $\{d(m,n)\}_{m\geq0,n\geq0}$  is finitely generated.

Hence, the two-dimensional sequence $\{d(m,n)\}_{m\geq1,n\geq0}$ is $2$-automatic and $\{d(m,n)\}_{m\geq0,n\geq0}$ is $2$-regular.  An immediate consequence of a result of Salon in \cite{S1986,S1987} is that the sequences $\{d(m,n)\}_{n\geq0}$  are $2$-automatic for all $m\geq1$. More about multidimensional automatic sequences and regular sequences, please see \cite{AS03,jp}.

\end{remark}

\end{document}